\title{Reflexivity and Hochschild Cohomology}
\author{Isambard Goodbody}
\date{\today}
\DeclareMathOperator{\iso}{iso}
\DeclareMathOperator{\Ho}{Ho}
\DeclareMathOperator{\Res}{Res}
\DeclareMathOperator{\ev}{ev}
\DeclareMathOperator{\eval}{eval}
\DeclareMathOperator{\oHom}{Hom}
\DeclareMathOperator{\RHom}{\underline{\mathbb{R}Hom}}
\DeclareMathOperator{\thick}{thick}
\DeclareMathOperator{\Mod}{Mod}
\DeclareMathOperator{\coh}{coh}
\DeclareMathOperator{\perf}{perf}
\DeclareMathOperator{\fd}{fd}
\DeclareMathOperator{\cf}{fd}
\DeclareMathOperator{\dgcat}{DGcat}
\DeclareMathOperator{\Hqe}{Hqe}
\DeclareMathOperator{\Hmo}{Hmo}
\DeclareMathOperator{\Refl}{Refl}
\DeclareMathOperator{\Dual}{Dual}
\DeclareMathOperator{\Hom}{\underline{Hom}}
\DeclareMathOperator{\uD}{\underline{\mathcal{D}}}
\DeclareMathOperator{\D}{\mathcal{D}}
\DeclareMathOperator{\uC}{\underline{\mathcal{C}}}
\DeclareMathOperator{\DPic}{DPic}
\begin{document}

\usetikzlibrary{matrix}
\usetikzlibrary{shapes}

\theoremstyle{plain}
\newtheorem{prop}{Proposition}[section]
\newtheorem{lemma}[prop]{Lemma}
\newtheorem{theorem}[prop]{Theorem}
\newtheorem{cor}[prop]{Corollary}
\newtheorem{conj}[prop]{Conjecture}

\theoremstyle{definition}
\newtheorem{defn}[prop]{Definition}
\newtheorem{ass}[prop]{Assumption}
\newtheorem{cons}[prop]{Construction}
\newtheorem{ex}[prop]{Example}
\newtheorem{remark}[prop]{Remark}
\newtheorem*{ack}{Acknowledgements}
\maketitle

\tikzset{
    vert/.style={anchor=south, rotate=90, inner sep=.5mm}
}

\newcommand{\rightarrowdbl}{\rightarrow\mathrel{\mkern-14mu}\rightarrow}

\newcommand{\xrightarrowdbl}[2][]{%
  \xrightarrow[#1]{#2}\mathrel{\mkern-14mu}\rightarrow
}

\begin{abstract}
We characterise reflexive DG-categories, as introduced by Kuznetsov and Shinder, as the reflexive objects in the closed symmetric monoidal category of DG-categories localised at Morita equivalences. As consequences, we show that the Hochschild cohomology and the derived Picard group of a reflexive DG-category coincide with those of its derived category of cohomologically finite modules.

\end{abstract}

\tableofcontents

\section{Introduction}

Reflexivity is about a duality between two types of derived categories which appear in algebra and geometry. If $X$ is a projective scheme, then the difference between the category of perfect complexes $\mathcal{D}^{\perf}(X)$ and the bounded derived category $\mathcal{D}^b(\coh X)$ is a measure of smoothness; they coincide if and only if $X$ is regular. Similarly if $A$ is a finite-dimensional algebra, then $A$ has finite global dimension if and only if its perfect and bounded derived categories coincide. Reflexivity concerns the interplay of these categories in the singular setting -- when the two categeories are distinct.
\\

Results indicating that these two triangulated categories determine each other and satisfy a kind of duality have been in the air for a long time. For example, see \cite[Theorem A.1]{VB02}, \cite[Corollary 8.3]{Ric89}, \cite[Theorems 3.2, 3.12]{Bal11} and \cite[Theorem 2.8]{Che21}.  Reflexive DG-categories were defined in \cite{KS22} to abstract this duality in an enhanced setting. That is to say, the DG-structure is taken into account, instead of just the triangulated structure. The key motivating examples of reflexive DG-categories are the perfect and bounded derived categories of projective schemes and finite-dimensional algebras. For a survey of other examples, see \cite{BGO25}.
\\

The two DG-categories associated to a reflexive DG-category $\mathcal{A}$ which abstract the perfect and bounded derived categories are the perfect derived category $\uD^{\perf}(\mathcal{A})$ and the category of cohomologically finite modules $\uD_{\fd}(\mathcal{A})$ (see Section \ref{DGprelims} for definitions). These two categories share a great deal of common information; it was shown in \cite[Corollaries 3.16 and 3.17]{KS22} that there is a bijection between their classes of semiorthogonal decompositions and that their
triangulated autoequivalence groups are isomorphic. This information lives at the triangulated level and in this paper, we show that there is also a significant amount of
enhanced information shared between these two categories. 
\\

The Hochschild cohomology of DG-category is a powerful invariant which controls its deformation theory and has a geometric interpretation via the Hochschild-Konstant-Rosenberg isomorphism (see Section \ref{HHsectionfd}). Semireflexivity (Definition \ref{reflsemirefldefn}) is a weaker property than reflexivity and is satisfied by any proper DG-category by \cite[Lemma 3.14]{KS22}. 

\begin{theorem}[Theorem \ref{semireflhhiso}] \label{introhhiso}
If $\mathcal{A}$ is a semireflexive DG-category, there is a quasi-isomorphism between the Hochschild cohomology DG-algebras. 
\[
HH(\uD^{\perf}(\mathcal{A})) \simeq HH(\uD_{\fd}(\mathcal{A}))
\]
\end{theorem}

The derived Picard group of an enhanced triangulated category is its group of enhanced autoequivalences. Theorem \ref{introdpic} can be thought of as an enhanced version of \cite[Corollary 3.16]{KS22}. 

\begin{theorem}[Theorem \ref{DPic}] \label{introdpic}
If $\mathcal{A}$ is reflexive DG-category, there is an isomorphism of derived Picard groups 
\[
\DPic(\uD^{\perf}(\mathcal{A})) \simeq \DPic(\uD_{\fd}(\mathcal{A}))
\]
\end{theorem}

Theorems \ref{introhhiso} and \ref{introdpic} follow from a new characterisation reflexivity in terms of the monoidal structure on the category $\Hmo$ of DG-categories localised at Morita equivalences (Section \ref{duals}). By  \cite[Theorem 6.1]{toe06}, the  monoidal category $\Hmo$ is closed i.\,e. it has an internal hom. In any closed symmetric monoidal category, the reflexive objects are defined as the largest subcategory on which the duality functor restricts to an equivalence. 

\begin{theorem}[Theorem \ref{samedef}] \label{samedefintro}
A DG-category is reflexive if and only if it represents a reflexive object in the closed symmetric monoidal category $\Hmo$.
\end{theorem}

In a closed symmetric monoidal category, any dualisable object is reflexive but the converse is not true. By \cite[Theorem 1.43]{Tab15}, the dualisable objects in $\Hmo$ are the smooth and proper DG-categories. This includes derived categories of smooth projective schemes and finite-dimensional algebras of finite global dimension over perfect fields. Theorem \ref{samedefintro} provides a monoidal comparison between reflexive DG-categories and smooth and proper DG-categories.
\\

In Section \ref{prelims}, we provide some preliminaries on monoidal categories and DG-categories. In Section \ref{KSreflsect}, we recall Kuznetsov and Shinder's definition of a reflexive DG-category along with a detailed construction of the evaluation map involved which is omitted from \cite{KS22}. In Section \ref{reflDGsec}, we prove Theorem \ref{samedefintro}. In Section \ref{HHreflsec}, we apply this to prove Theorems \ref{introhhiso} and \ref{introdpic}. In Section \ref{fdsec}, we consider the special case of finite-dimensional DG-algebras and show that the power series ring is reflexive (Example \ref{powerseriesrefl}).

\begin{ack}
I'd like to thank my supervisor Greg Stevenson for his helpful guidance. I am also grateful to Alexander Kuznetsov for useful comments and an anonymous reviewer for a number of suggestions. I am supported by a PhD scholarship from the Carnegie Trust for the Universities of Scotland. 
\end{ack}

\section{Preliminaries} \label{prelims}

\subsection{Monoidal Categories}
\label{sectreflmon}

We collect some well-known results on monoidal categories. See \cite{DP84}, \cite[Section III.1]{LMS86}, or \cite{EGNO15}, for more background. Let $(\mathcal{M},\otimes,\bm{1})$ be a symmetric monoidal category whose internal hom we denote $\hom(-,-)$. Recall this means that there is a functor 
\[
- \otimes -: \mathcal{M} \times \mathcal{M} \to \mathcal{M}
\]
which behaves like the tensor product of vector spaces in that it is associative, symmetric and unital with respect to $\bm{1} \in \mathcal{M}$, up to some coherent isomorphisms. We will furthermore assume that $\mathcal{M}$ is closed which means there is an internal hom functor 
\[
\hom(-,-): \mathcal{M}^{op} \times \mathcal{M} \to \mathcal{M} 
\]
which satisfies the tensor-hom adjunction. We define the duality functor as 
\[
D \coloneq \hom(-,\bm{1}) \colon \mathcal{M} \to \mathcal{M}^{op}.
\]
It is left adjoint to $D^{op} \colon \mathcal{M}^{op} \to \mathcal{M}$. Indeed, there are natural isomorphisms
\begin{equation}\label{internaladjunction}
\hom(x,Dy) \simeq \hom(x \otimes y, \bm{1}) \simeq \hom(y \otimes x, \bm{1}) \simeq \hom(y,Dx).  
\end{equation}
The unit and the counit coincide as a natural transformation
\[
\eval \colon 1_{\mathcal{M}} \to DD.
\] 
\begin{defn}
An object $x \in \mathcal{M}$ is reflexive if $\eval_x$ is an isomorphism. Let $\Refl(\mathcal{M}) \subseteq \mathcal{M}$ denote the full subcategory of reflexive objects of $\mathcal{M}$.
\end{defn}

\begin{prop} \label{reflexivechar}
If $\mathcal{M}$ is a closed symmetric monoidal category, then $D$ restricts to a self-inverse equivalence 
\[
D\colon \Refl(\mathcal{M}) \xrightarrow{\sim}\Refl(\mathcal{M})^{op} 
\]
and if $D$ restricts to an equivalence $D: \mathcal{C} \xrightarrow{\sim} \mathcal{C}^{op}$ for a subcategory $\mathcal{C} \subseteq \mathcal{M}$, then $\mathcal{C} \subseteq \Refl(\mathcal{M})$. 
\end{prop}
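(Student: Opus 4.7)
The key structural fact driving both parts is the triangle identity for the self-adjunction $D \dashv D$. With the convention in the excerpt that both the unit and counit coincide as a single natural transformation $\eval \colon 1_\mathcal{M} \to DD$, the relevant triangle identity reads
\[
D\eval_m \circ \eval_{Dm} = \id_{Dm}
\]
for every $m \in \mathcal{M}$. I would derive this once at the outset from the standard triangle identities applied to the self-adjunction.

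\textbf{First claim (self-inverse equivalence).} I would begin by showing closure of reflexivity under $D$. If $m$ is reflexive then $\eval_m$ is an isomorphism, so $D\eval_m$ is too; the triangle identity then forces $\eval_{Dm}$ to be the two-sided inverse of $D\eval_m$, hence an isomorphism. So $D$ restricts to a functor $\Refl(\mathcal{M}) \to \Refl(\mathcal{M})^{op}$. On this subcategory $\eval$ is, by definition, a natural isomorphism $1 \Rightarrow DD$, which directly exhibits $D$ as a self-inverse equivalence there.

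\textbf{Maximality.} Suppose $\mathcal{C} \subseteq \mathcal{M}$ is a full subcategory on which $D$ restricts to an equivalence $\mathcal{C} \to \mathcal{C}^{op}$. Since $D$ carries $\mathcal{C}$ into itself in both directions, the ambient self-adjunction $D \dashv D$ restricts to a self-adjunction on $\mathcal{C}$, and by naturality the unit of this restricted adjunction is simply $\eval|_\mathcal{C}$. The restricted $D$ is fully faithful (as any equivalence is), and I would invoke the standard fact that the unit of an adjunction whose left adjoint is fully faithful is a natural isomorphism. Therefore $\eval_m$ is an isomorphism for every $m \in \mathcal{C}$, i.e.\ $\mathcal{C} \subseteq \Refl(\mathcal{M})$.

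The only mildly delicate point is the maximality argument, where one must recognise that the restricted self-adjunction on $\mathcal{C}$ has the ambient $\eval$ as its unit and then apply the fully-faithful-left-adjoint criterion. Both observations are routine once the setup has been assembled, so I expect no serious obstacle.
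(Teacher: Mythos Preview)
Your proposal is correct and follows essentially the same line as the paper's proof: use the triangle identity to see that $D$ preserves reflexive objects, observe that on $\Refl(\mathcal{M})$ the unit/counit $\eval$ is invertible so the restricted self-adjunction is an equivalence, and for maximality note that the restricted adjunction on $\mathcal{C}$ still has $\eval$ as unit, which must therefore be invertible. The paper compresses all of this into three sentences, whereas you spell out the triangle identity explicitly and invoke the fully-faithful-left-adjoint criterion rather than simply saying ``the unit and counit must be isomorphisms,'' but these are the same argument.
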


\begin{proof}
The triangle identity of the adjunction implies that $D$ restricts to a contravariant self-adjunction on $\Refl(\mathcal{M})$. Since the unit and counit are isomorphisms for all reflexive objects, it is an equivalence. If $D$ restricts to an equivalence on $\mathcal{C}$, then the unit and counit must be isomorphisms and so every object of $\mathcal{C}$ is reflexive. 
\end{proof}

\begin{remark} We consider some canonical morphisms in $\mathcal{M}$. 

\begin{enumerate} \label{laxmonoidalfunc}
\item For $x \in \mathcal{M}$, the counit of the adjunction $- \otimes x \dashv \hom(x,-)$ evaluated at the unit produces a map $\varepsilon_{\bm{1}}^x\colon Dx  \otimes x \to \bm{1}$. 

\item For $x,y \in \mathcal{M}$, the adjunct of $\varepsilon_{\bm{1}}^x \otimes \varepsilon_{\bm{1}}^y$ produces maps 
\[
\mu_{x,y}\colon Dx \otimes Dy \to D(x \otimes y)
\]
which make  $D\colon \mathcal{M} \to \mathcal{M}^{op}$ a lax monoidal functor (recall a functor $F$ between monoidal categories is lax monoidal if there are coherent morphisms $F(x) \otimes F(y) \to F(x \otimes y)$ for all objects $x,y$).

\item For ${x,y \in \mathcal{M}}$, there is a map 
\[
Dx \otimes y \to \hom(x,y)
\]
defined as the image of $\varepsilon_{\bm{1}}^x$ under
\[
\oHom(Dx \otimes x, \bm{1}) \xrightarrow{- \otimes y} \oHom(Dx \otimes x \otimes y, y) \simeq \oHom(Dx \otimes y, \hom(x,y)).
\]

\end{enumerate}
\end{remark}

Reflexivity should be compared to dualisability.

\begin{defn}
An object in $x \in \mathcal{M}$ is dualisable if the canonical map 
\[
Dx \otimes x \to \hom(x,x) 
\]
is an isomorphism. Let $\Dual(\mathcal{M})$ denote the full subcategory of dualisable objects in $\mathcal{M}$.  
\end{defn}

There are various equivalent definitions of dualisability. The following proposition can be extracted from \cite[Theorem 1.3]{DP84} and \cite[Proposition III.1.3]{LMS86}.

\begin{prop} \label{dualdef}
Let $\mathcal{M}$ be a closed symmetric monoidal category and $x \in \mathcal{M}$. The following are equivalent.
\begin{enumerate}
\item $x$ is dualisable.

\item There is a map $c\colon \bm{1} \to x \otimes Dx$ such that the composites
\begin{align*}
 x \xrightarrow{c \otimes x} & x \otimes Dx \otimes x \xrightarrow{x \otimes \varepsilon_{\bm{1}}}  x \\
Dx \xrightarrow{Dx \otimes c} & Dx \otimes x \otimes Dx  \xrightarrow{\varepsilon_{\bm{1}} \otimes Dx }  Dx 
\end{align*}
are both the identity.

\item The canonical map $Dx \otimes y \to \hom(x,y)$ is an isomorphism for all $y \in \mathcal{M}$.

\item $x$ is reflexive and $\mu_{x,y}\colon Dx \otimes Dy \to D(x \otimes y)$ is an isomorphism for all $y \in \mathcal{M}$.

\item $x$ is reflexive and $\mu_{x,Dx}\colon Dx \otimes DDx \to D(x \otimes Dx)$ is an isomorphism. 

\item $x$ is reflexive and $Dx$ is dualisable. 

\end{enumerate}
\end{prop}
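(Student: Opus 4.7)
The plan is to establish a cyclic chain $(1)\Rightarrow(2)\Rightarrow(3)\Rightarrow(1)$ first, and then $(1)\Rightarrow(4)\Rightarrow(5)\Rightarrow(6)\Rightarrow(1)$. The opening cycle is classical (see \cite{DP84,LMS86}): for $(1)\Rightarrow(2)$, define $c\colon \bm{1}\to x\otimes Dx$ as the swap of the image of $1_x$ under the inverse of the iso $Dx\otimes x\simeq \hom(x,x)$ and verify the zigzag identities by adjunction yoga. For $(2)\Rightarrow(3)$, construct an inverse to the canonical map $Dx\otimes y\to \hom(x,y)$ from $c$ and the internal evaluation $\hom(x,y)\otimes x\to y$; the zigzag identities force this to be two-sided inverse. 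The implication $(3)\Rightarrow(1)$ is the case $y=x$.

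For the remaining conditions I would first note that $(1)$ implies reflexivity of $x$ by the standard argument: the inverse to $\eval_x$ is built from the coevaluation $c$ and the evaluation $\varepsilon_{\bm{1}}^{Dx}\colon DDx\otimes Dx\to \bm{1}$, with the two composites being the identity by the zigzag identities. Once reflexivity is available, $(1)\Rightarrow(4)$ follows: under the canonical iso $D(x\otimes y)\simeq \hom(x,Dy)$, the lax monoidal structure map $\mu_{x,y}$ is carried to the Remark's canonical map $Dx\otimes Dy\to \hom(x,Dy)$ applied with $Dy$ in place of $y$, and that is an iso by $(3)$. The implication $(4)\Rightarrow(5)$ is immediate. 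For $(5)\Rightarrow(6)$ one considers
\[
DDx\otimes Dx \xrightarrow{\sigma} Dx\otimes DDx \xrightarrow{\mu_{x,Dx}} D(x\otimes Dx)\simeq \hom(Dx,Dx),
\]
where the final identification is the adjunction isomorphism $\hom(x\otimes Dx,\bm{1})\simeq \hom(Dx,Dx)$; a diagram chase shows this composite is the canonical map $DDx\otimes Dx\to \hom(Dx,Dx)$, so $Dx$ is dualizable.

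Finally, for $(6)\Rightarrow(1)$ I would transport a coevaluation $c'\colon \bm{1}\to Dx\otimes DDx$ of $Dx$ through $Dx\otimes \eval_x^{-1}$ and the symmetry to obtain $c\colon \bm{1}\to x\otimes Dx$; the zigzag identities for the dual pair $(Dx, DDx)$ translate into those for $(x, Dx)$ via the compatibility $\varepsilon_{\bm{1}}^{Dx}\circ(\eval_x\otimes Dx)=\varepsilon_{\bm{1}}^x\circ \sigma$, which is essentially the defining property of $\eval_x$. This gives condition $(2)$ and hence $(1)$. Throughout, the main technical obstacle is matching up the various canonical maps --- the lax monoidal $\mu$, the Remark's canonical map $Dx\otimes y\to \hom(x,y)$, the evaluation $\varepsilon_{\bm{1}}$, and the symmetry --- so that one can freely pass between them. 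Each such identification is a routine adjunction or Yoneda calculation, but careful bookkeeping with associators and the symmetry is needed to ensure the identifications commute with the structural isomorphisms used in the cycle.
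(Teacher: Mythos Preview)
Your proposal is correct and follows essentially the same route as the paper's (commented-out) argument: both run the cycle $(1)\Leftrightarrow(2)\Leftrightarrow(3)$ in the classical way, deduce reflexivity from the coevaluation data of $(2)$, and then establish $(1)\Rightarrow(4)\Rightarrow(5)\Rightarrow(6)$ by identifying $\mu_{x,y}$ with the canonical map $Dx\otimes Dy\to\hom(x,Dy)$ and matching $\mu_{x,Dx}$ against the dualizability map for $Dx$.

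The only substantive difference is in $(6)\Rightarrow(1)$. You transport the coevaluation $c'\colon\bm{1}\to Dx\otimes DDx$ of $Dx$ through $\eval_x^{-1}$ and the symmetry, and then re-verify the zigzag identities for $(x,Dx)$ using the compatibility $\varepsilon_{\bm{1}}^{Dx}\circ(\eval_x\otimes Dx)=\varepsilon_{\bm{1}}^x\circ\sigma$. The paper instead observes that the already-established chain $(1)\Rightarrow\cdots\Rightarrow(6)$ shows in particular that the dual of any dualizable object is dualizable; applying this to $Dx$ gives $DDx$ dualizable, and since $x$ is reflexive $x\simeq DDx$ is dualizable. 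Your route is more explicit and self-contained but costs an extra verification; the paper's is a one-line reuse of the forward implication once it is in place.
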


\begin{prop}
Suppose $\mathcal{M}$ is a closed symmetric monoidal category. Then $\Dual(\mathcal{M})$ is a symmetric monoidal subcategory of $\mathcal{M}$ and $D$ restricts to a strong monoidal equivalence 
\[
D\colon \Dual(\mathcal{M}) \xrightarrow{\sim} \Dual(\mathcal{M})^{op}.
\]
and $\Dual(\mathcal{M})$ is maximal with respect to this property. 
\end{prop}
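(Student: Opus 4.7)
The plan is to verify each claim in turn, using the equivalent characterisations of dualizability in Proposition \ref{dualdef} to reduce everything to manipulations of the coevaluation maps provided by characterisation (2).

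\textbf{Closure under the monoidal structure.} First I would check $\bm{1} \in \Dual(\mathcal{M})$: the canonical map $D\bm{1} \otimes \bm{1} \to \hom(\bm{1},\bm{1})$ is an isomorphism because both sides are canonically isomorphic to $\bm{1}$. Next, if $x,y \in \Dual(\mathcal{M})$ with coevaluations $c_x\colon \bm{1} \to x \otimes Dx$ and $c_y\colon \bm{1} \to y \otimes Dy$ from characterisation (2), I would produce a coevaluation for $x \otimes y$ by
\[
\bm{1} \xrightarrow{c_x} x \otimes Dx \xrightarrow{x \otimes c_y \otimes Dx} x \otimes y \otimes Dy \otimes Dx \xrightarrow{\sim} (x \otimes y) \otimes D(x \otimes y),
\]
where the last map uses that $Dy \otimes Dx \cong D(x \otimes y)$ (characterisation (4)). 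A direct symmetric-monoidal calculation using the triangle identities for $c_x$ and $c_y$ verifies the triangle identities for this composite, so $x \otimes y$ is dualizable by (2). Hence $\Dual(\mathcal{M})$ is a symmetric monoidal subcategory.

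\textbf{Strong monoidal equivalence.} Characterisation (6) of Proposition \ref{dualdef} tells us that $Dx$ is dualizable whenever $x$ is, so $D$ restricts to a functor $\Dual(\mathcal{M}) \to \Dual(\mathcal{M})^{op}$. Every dualizable object is reflexive, so on this subcategory the unit $\eval\colon 1 \Rightarrow DD$ is an isomorphism and hence $D$ is an equivalence. For strong monoidality, the lax structure map $\mu_{x,y}\colon Dx \otimes Dy \to D(x \otimes y)$ is an isomorphism for all $x \in \Dual(\mathcal{M})$ and all $y \in \mathcal{M}$ by characterisation (4), and the unit constraint $D\bm{1} = \hom(\bm{1},\bm{1}) \simeq \bm{1}$ is an isomorphism by definition. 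Thus $D$ is strong monoidal on $\Dual(\mathcal{M})$.

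\textbf{Maximality.} Suppose $\mathcal{C} \subseteq \mathcal{M}$ is a subcategory on which $D$ restricts to a strong monoidal equivalence; I need to show $\mathcal{C} \subseteq \Dual(\mathcal{M})$. Since $D$ is an equivalence on $\mathcal{C}$, every $x \in \mathcal{C}$ is reflexive and $Dx \in \mathcal{C}$. Since $D$ is strong monoidal on $\mathcal{C}$, the lax structure map $\mu_{x,Dx}\colon Dx \otimes DDx \to D(x \otimes Dx)$ is an isomorphism. Characterisation (5) of Proposition \ref{dualdef} then gives that $x$ is dualizable, so $\mathcal{C} \subseteq \Dual(\mathcal{M})$.

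The main obstacle is really bookkeeping rather than conceptual difficulty: one has to be careful that the coevaluation built for $x \otimes y$ satisfies the triangle identities, and that the canonical map used to identify $Dy \otimes Dx$ with $D(x \otimes y)$ is compatible with the standard symmetry isomorphisms so that the resulting structure maps on $D$ genuinely witness strong monoidality. Once the coherence diagrams are traced through, each individual step is a routine application of the triangle identities for $c_x$ and $c_y$.
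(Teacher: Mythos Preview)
Your proposal is correct, and the maximality and strong-monoidal-equivalence steps match the paper almost verbatim. The one genuine difference is how you show $\Dual(\mathcal{M})$ is closed under $\otimes$. You build the coevaluation for $x \otimes y$ explicitly from $c_x$ and $c_y$ and verify the triangle identities of characterisation (2); the paper instead chains together the isomorphisms
\[
D(x\otimes y)\otimes x\otimes y \;\cong\; Dx\otimes Dy\otimes x\otimes y \;\cong\; \hom(x,Dy\otimes x\otimes y) \;\cong\; \hom(x,\hom(y,x\otimes y)) \;\cong\; \hom(x\otimes y,x\otimes y)
\]
using (4) for the first step and (3) for the second, and then observes that the composite is the canonical map. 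The paper's route is shorter on the page but hides the coherence check behind ``one can check this is the canonical map''; your route makes the coevaluation explicit at the cost of a routine triangle-identity verification. Either approach is fine and the remaining steps coincide.
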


\begin{proof}
If $x$ and $y$ are dualisable, then by (3) and (4) of Proposition \ref{dualdef}, 
\begin{align*}
D(x \otimes y) \otimes x \otimes y & \simeq Dx \otimes Dy \otimes x \otimes y  \\
& \simeq \hom(x,Dy \otimes x \otimes y) \\ 
& \simeq \hom(x,\hom(y,x \otimes y)) \\
& \simeq \hom(x \otimes y, x \otimes y).
\end{align*}
One can check this is the canonical map and so $x \otimes y$ is dualisable. Recall by Remark \ref{laxmonoidalfunc}, $D$ is a lax monoidal functor via the morphisms $\mu_{x,y}$. By definition, $D$ is strong monoidal exactly when these are isomorphisms. So by (4), $D$ restricts to a strong monoidal equivalence. Conversely, if $D$ restricts to an equivalence on a monoidal subcategory $\mathcal{C}$, then we must have $\mathcal{C} \subseteq \Refl(\mathcal{M})$. 
Since the equivalence is strong monoidal, we have that $\mu_{x,Dx}$ is an isomorphism for all $x \in \mathcal{C}$. Then by (5), $x \in \Dual(\mathcal{M})$. 
\end{proof}

\begin{remark}
Dualisability is a notion of smallness and coincides with other notions of smallness in many cases. The dualisable objects correspond to the finite-dimensional representations over a finite group; finitely generated projectives in the module category of commutative ring; perfect complexes in the derived category of a commutative ring; and finite spectra in the stable homotopy category.
\end{remark}

\begin{ex}
In general, there are more reflexive objects than dualisable objects. In $\Mod \mathbb{Z}$, the Specker group $\prod_{\mathbb{N}} \mathbb{Z}$ is reflexive but not dualisable. In $\mathcal{D}(R)$, for a commutative ring $R$, the object $\bigoplus_{\mathbb{Z}} \Sigma^i R$ is reflexive but not dualisable.

\end{ex}

\begin{remark}
The reflexive objects do not form a monoidal subcategory in general. Consider $M \coloneq \bigoplus_{i\in \mathbb{Z}} \Sigma^i k \in \mathcal{D}(k)$. This is reflexive but $M \otimes_k M$ is not for cardinality reasons.  
\end{remark}

\begin{prop} \label{reflretract}
Reflexive objects and dualisable objects are closed under retracts.
\end{prop}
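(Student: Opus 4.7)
The plan is to use the standard fact that a retract of an isomorphism in any category is an isomorphism, applied to the arrow category.

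Suppose $y$ is a retract of $x$ with maps $i\colon y \to x$ and $r\colon x \to y$ satisfying $ri = 1_y$.

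For reflexivity, assume $x \in \Refl(\mathcal{M})$. Naturality of $\eval$ gives a commutative diagram
\[
\begin{tikzcd}
y \arrow[r,"i"] \arrow[d,"\eval_y"] & x \arrow[r,"r"] \arrow[d,"\eval_x"] & y \arrow[d,"\eval_y"] \\
DDy \arrow[r,"DDi"] & DDx \arrow[r,"DDr"] & DDy
\end{tikzcd}
\]
in which both horizontal composites are identities since $D$ is a functor. Thus $\eval_y$ is a retract of $\eval_x$ in the arrow category of $\mathcal{M}$, and since a retract of an isomorphism is an isomorphism, $\eval_y$ is invertible; explicitly its inverse is $r \circ \eval_x^{-1} \circ DDi$.

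For dualizability, I would use characterisation (3) of Proposition \ref{dualdef}. Assume $x \in \Dual(\mathcal{M})$; it suffices to show $Dy \otimes z \to \hom(y,z)$ is an isomorphism for every $z \in \mathcal{M}$. The canonical map is natural in the first variable, so applying the contravariant functors $D(-) \otimes z$ and $\hom(-,z)$ to the retract $y \xrightarrow{i} x \xrightarrow{r} y$ produces the commutative diagram
\[
\begin{tikzcd}
Dy \otimes z \arrow[r,"Dr \otimes z"] \arrow[d] & Dx \otimes z \arrow[r,"Di \otimes z"] \arrow[d,"\sim"] & Dy \otimes z \arrow[d] \\
\hom(y,z) \arrow[r,"\hom(r,z)"] & \hom(x,z) \arrow[r,"\hom(i,z)"] & \hom(y,z)
\end{tikzcd}
\]
whose rows compose to the identity (again by functoriality, since $ri = 1_y$). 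Hence the outer vertical map is a retract of the middle one, which is an isomorphism by dualizability of $x$, and so is itself an isomorphism.

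No step here is really the main obstacle; the only thing to be careful about is the direction of the retract after applying the contravariant functors $D$ and $\hom(-,z)$, which is handled by the identity $D(ri) = 1_{Dy}$ and the analogous computation for $\hom(-,z)$.
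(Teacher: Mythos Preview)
Your proof is correct and follows the same idea as the paper's: show that the relevant canonical map for the retract is itself a retract (in the arrow category) of the corresponding map for the ambient object, then use that isomorphisms are closed under retracts. The paper leaves the dualizable case as ``similar,'' while you spell it out via characterisation (3) of Proposition~\ref{dualdef}, which is a perfectly natural way to make the analogy precise.
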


\begin{proof}
Suppose $x \xrightarrow{f} y \xrightarrow{g} x$ are such that $gf = 1_x$ and $y$ is reflexive. Then, by naturality of $\eval$, $\eval_x$ is a retract of $\eval_y$. Then the result holds since isomorphisms are closed under retracts. The proof is similar for dualisable objects.
\end{proof}

\begin{remark} \label{enricheddual}
We note that the duality functor $D\colon \mathcal{M} \to \mathcal{M}^{op}$ lifts to an $\mathcal{M}$-enriched functor (see the bottom of page 15 in \cite{Ke82}). This means that for every pair of objects  $x,y \in \mathcal{M}$ there is a morphism in $\mathcal{M}$ 
\[
D_{x,y}\colon \hom(x,y) \to \hom(Dy,Dx)
\]
which is functorial and such that the following diagram commutes.
\[
\begin{tikzcd}[column sep = huge]
\oHom_{\mathcal{M}}(1,\hom(x,y))  \arrow[r,"{\oHom(1,D_{x,y})}"] \arrow[d,"\sim" {anchor=south, rotate=90},swap] & \oHom_{\mathcal{M}}(1, \hom(Dy,Dx)) \arrow[d,"\sim" {anchor=south, rotate=90}] \\
\oHom_{\mathcal{M}}(x,y) \arrow[r,"D"] & \oHom_{\mathcal{M}}(Dy,Dx) 
\end{tikzcd}
\]
where the bottom map is the action of the functor $D\colon \mathcal{M} \to \mathcal{M}^{op}$. Furthermore, Equation \ref{internaladjunction} shows that $D \dashv D^{op}$ is an $\mathcal{M}$-enriched adjunction (see Section 1.11 in \cite{Ke82}) and so the following diagram commutes
\begin{equation} \label{evaladjointdiagram}
\begin{tikzcd}
\hom(x,y) \arrow[rr,"{D_{x,y}}"] \arrow[dr,"{\hom(x,\eval_y)}",swap] & & \hom(Dy,Dx) \\
& \hom(x,DDy) \arrow[ur,"\sim",sloped] & 
\end{tikzcd}
\end{equation}
where the isomorphism is Equation \ref{internaladjunction}.

\end{remark}

\begin{prop} \label{reflequiv}
If $\mathcal{M}$ is a closed symmetric monoidal category and $y \in \mathcal{M}$ is reflexive, then
\[
D_{x,y}\colon \hom(x,y) \to \hom(Dy,Dx)
\]
is an isomorphism for all $x \in \mathcal{M}$.
\end{prop}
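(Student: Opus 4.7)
The plan is to combine the explicit factorization of the enriched dual from Lemma \ref{enricheddual} with the hypothesis that $\eval_y$ is an isomorphism. By Lemma \ref{enricheddual}, the morphism
\[
D\colon \hom(x,y) \to \hom(Dy,Dx)
\]
is defined as the composite
\[
\hom(x,y) \xrightarrow{\hom(x,\eval_y)} \hom(x,DDy) \xrightarrow{\sim} \hom(Dy, Dx),
\]
so it suffices to argue that each factor is an isomorphism.

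The second factor is an isomorphism for every pair of objects, not just reflexive ones: it is the internal form of the natural bijection
\[
\Hom(- \otimes x, DDy) \simeq \Hom(- \otimes x \otimes Dy, \bm{1}) \simeq \Hom(- \otimes Dy, Dx)
\]
coming from the self-adjunction $D \dashv D$ of Section \ref{sectreflmon}, as indicated in the proof of Lemma \ref{enricheddual}. So this factor can be dispensed with immediately.

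For the first factor, the assumption that $y$ is reflexive means precisely that $\eval_y\colon y \to DDy$ is an isomorphism in $\mathcal{M}$. Applying the enriched functor $\hom(x,-)$ (which, like any functor, preserves isomorphisms) yields an isomorphism $\hom(x,\eval_y)\colon \hom(x,y) \to \hom(x,DDy)$. Composing the two isomorphisms gives the claim.

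There is no real obstacle here, since both factors are transparently isomorphisms once the factorization of Lemma \ref{enricheddual} is in hand; the content of the proposition is really that the enriched dual is built in a functorial way from $\eval_y$, so that reflexivity of $y$ alone (with no hypothesis on $x$) is enough to make $D$ act as an isomorphism on the hom-objects $\hom(x,y)$.
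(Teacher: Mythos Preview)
Your proof is correct and follows exactly the paper's approach: both invoke the factorization of Lemma \ref{enricheddual} and observe that $\hom(x,\eval_y)$ is an isomorphism when $\eval_y$ is. The paper compresses this into a single sentence, while you have simply unpacked the two factors explicitly.
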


\begin{proof}
If $y$ is reflexive, then $\eval_y$ is an isomorphism and so $\hom(x,\eval_y)$ is too. So by Diagram \ref{evaladjointdiagram}, $D_{x,y}$ is an isomorphism.
\end{proof}

\subsection{DG-categories} \label{DGprelims}

Throughout let $k$ be a field and $\mathcal{C}(k)$ will denote the category of chain complexes over $k$. By a DG-category we mean a category enriched in $\mathcal{C}(k)$ i.e.\ instead of a set of morphisms between objects there is a chain complex of morphisms. See \cite{Ke82} for background on enriched category theory and \cite{Kel06} for background on DG-categories. By DG-functor, DG-natural transformation, DG-adjunction we mean the $\mathcal{C}(k)$-enriched notions. For any $M,N \in \mathcal{C}(k)$ there is a complex of morphisms between them denoted $\Hom_k(M,N)$. This makes the category of chain complexes a DG-category which we denote $\underline{\mathcal{C}}(k)$.
\\

A (left) module over a DG-category $\mathcal{A}$ is a DG-functor $\mathcal{A} \to \underline{\mathcal{C}}(k)$. These form a category $\mathcal{C}(\mathcal{A})$ whose morphisms are DG-natural transformations. Between any two objects in $\mathcal{C}(\mathcal{A})$ there is a chain complex of morphisms which can be defined as graded natural transformations as in \cite[Section 2.1.1]{AL17}. This forms a DG-category of $\mathcal{A}$-modules which we will denote $\underline{\mathcal{C}}(\mathcal{A})$. 
\\

If $\mathcal{A}$ is a DG-category, its homotopy category $H^0\mathcal{A}$ is the $k$-linear category with the same objects as $\mathcal{A}$ and with morphisms given by $H^0 \mathcal{A}(a,b)$. We note that $H^0 \underline{\mathcal{C}}(\mathcal{A})$ admits a triangulated structure with shifts and cones defined pointwise. For example, if a $k$-algebra $A$ is viewed as a DG-category with one object, then $H^0 \underline{\mathcal{C}}(A)$ is the triangulated category of chain complexes up to chain homotopy. 
\\

An object $M \in \mathcal{C}(\mathcal{A})$ is acyclic if $M(a)$ is acyclic for every $a \in \mathcal{A}$. The derived category $\mathcal{D}(\mathcal{A})$ of $\mathcal{A}$ is the Verdier quotient of the triangulated category $H^0 \underline{\mathcal{C}}(\mathcal{A})$ at acyclic modules. 
\\

An object $P \in \mathcal{C}(\mathcal{A})$ is $K$-projective if for any acylic object $M \in \mathcal{C}(\mathcal{A})$, the complex $\Hom_{\mathcal{A}}(P,M)$ is acyclic. Let $\underline{\mathcal{D}}(\mathcal{A}) \subseteq \underline{\mathcal{C}}(\mathcal{A})$ denote the full DG-subcategory consisting for $K$-projective modules. The restriction of the localisation functor induces an equivalence
\[
H^0 \underline{\mathcal{D}}(\mathcal{A}) \xrightarrow{\sim} \mathcal{D}(\mathcal{A})
\]

\begin{remark} \label{enrichedcof}

By Corollary 13.2.4 in \cite{Rie14} (see also Theorem 4.20 in \cite{Sch18}) and since $k$ is a field, there is a DG-functor $Q^{\mathcal{A}}: \underline{\mathcal{C}}(\mathcal{A}) \to \underline{\mathcal{C}}(\mathcal{A})$ whose image lies in $\underline{\mathcal{D}}(\mathcal{A})$ and a DG-natural transformation $\varepsilon^{\mathcal{A}}: Q^{\mathcal{A}} \to 1_{\underline{\mathcal{C}}(\mathcal{A})}$ which is a pointwise quasi-isomorphism. The DG-functor $Q^{\mathcal{A}}$ is a DG-enhancement of the cofibrant resolution functor sending a DG-module to its $K$-projective resolution.
\end{remark}

\begin{remark}
The category $\mathcal{D}(\mathcal{A})$ admits a natural triangulated structure and we let $\mathcal{D}^{\perf}(\mathcal{A})$ denote the subcategory of compact objects in $\mathcal{D}(\mathcal{A})$. Similarly we let $\uD^{\perf}(\mathcal{A})$ denote the full DG-subcategory of $\uD(\mathcal{A})$ consisting of objects whose image in $\mathcal{D}(\mathcal{A})$ are perfect.  
\end{remark}

\begin{defn}
If $\mathcal{A}$ is a DG-category, a module $M \in \mathcal{C}(\mathcal{A})$ is cohomologically finite if $H^\ast(M(a))$ is finite-dimensional for every $a \in \mathcal{A}$. We let $\mathcal{D}_{\fd}(\mathcal{A}), \mathcal{C}_{\fd}(\mathcal{A})$ denote the full subcategories of $\mathcal{D}(\mathcal{A})$ and $\mathcal{C}(\mathcal{A})$ consisting of cohomologically finite modules. Similarly we let $\underline{\mathcal{D}}_{\fd}(\mathcal{A}), \underline{\mathcal{C}}_{\fd}(\mathcal{A})$ denote the full DG-subcategories of $\underline{\mathcal{D}}(\mathcal{A}), \underline{\mathcal{C}}(\mathcal{A})$ consisting of cohomologically finite modules. 
\end{defn}

\begin{remark}
Recall $k$ can be viewed as a DG-category with a single object and endomorphism complex given by $k$ in degree zero. The category of modules over this DG-category coincides with the category of chain complexes $\mathcal{C}(k)$. We will write $\mathcal{D}^b(k)$ for $\D^{\perf}(k) = \D_{\fd}(k)$ and $\uD^b(k)$ for $\uD^{\perf}(k) = \uD_{\fd}(k)$.
\end{remark}

\begin{remark} \label{adjointDG}

If $M \in \mathcal{C}(\mathcal{A}^{op} \otimes \mathcal{B})$ is a $\mathcal{B}$-$\mathcal{A}$--bimodule then there is a DG-adjunction
\[
M \otimes_{\mathcal{A}} - : \underline{\mathcal{C}}(\mathcal{A}) \to \underline{\mathcal{C}}(\mathcal{B}): \Hom_{\mathcal{B}}(M,-)
\]
See \cite[Section 2.1.5]{AL17}. These derive to an adjoint pair
\[
M \otimes^{\mathbb{L}}_{\mathcal{A}} - : {\mathcal{D}}(\mathcal{A}) \to {\mathcal{D}}(\mathcal{B}): \RHom_{\mathcal{B}}(M,-)
\]

Given a DG-functor $F\colon \mathcal{A} \to \mathcal{B}$, there is an adjoint triple.
\[
\begin{tikzcd}[column sep = large]
\mathcal{D}(\mathcal{A}) \arrow[r," \mathcal{B} \otimes^{\mathbb{L}}_\mathcal{A} - ",shift left, bend left] \arrow[r,"{\RHom_{\mathcal{A}}(\mathcal{B},-)}",bend right,swap] & \arrow[l,"\Res(F)",swap] \mathcal{D}(\mathcal{B})
\end{tikzcd}
\]
where $\Res(F)$ sends $M \in \mathcal{C}(\mathcal{B})$ to the DG-functor $MF \in \mathcal{C}(\mathcal{A})$. Note that $\Res(F)$ preserves quasi-isomorphisms. 
\end{remark}

\subsection{The Categories $\Hmo$ and $\Hqe$} \label{duals} We wish to consider DG-categories up to some weak notions of equivalence: quasi-equivalences and Morita equivalence. 

\begin{defn} \label{moritaequivadef}
A DG-functor $F: \mathcal{A} \to \mathcal{B}$ is quasi-fully faithful if it induces quasi-isomorphisms $\mathcal{A}(a,a') \to \mathcal{B}(F(a), F(a'))$ for all $a,a' \in \mathcal{A}$. We say $F$ is a quasi-equivalence if it is quasi-fully faithful and the induced functor $H^0(F): H^0\mathcal{A} \to H^0\mathcal{B}$ is essentially surjective. We say $F$ is a Morita equivalence if $F$ induces an equivalence $\Res(F): \mathcal{D}(\mathcal{A}) \simeq \mathcal{D}(\mathcal{B})$ as in  Remark \ref{adjointDG}.
\end{defn}

\begin{remark}
As an example of why we consider these notions of equivalence consider two $k$-algebras $A,B$ which are derived equivalent. By \cite[Theorem 6.4]{Ric89}, there is a DG-functor $\uD(A) \to \uD(B)$ which is a quasi-equivalence but usually not a strict DG-equivalence. Furthermore if we view $A$ an $B$ as DG-categories with a single object, then they are Morita equivalent in the sense of Definition \ref{moritaequivadef}. 
\end{remark}

\begin{remark} \label{YonedaisMorita}
Any quasi-equivalence of DG-categories is a Morita equivalence. A key example of a Morita equivalence is the Yoneda embedding $\mathcal{A} \hookrightarrow \uD^{\perf}(\mathcal{A})^{op}: a \mapsto \mathcal{A}(a,-)$. 
\end{remark}

Let $\dgcat$ denote the category of small DG-categories with DG-functors between them. In order to consider DG-categories up to quasi-equivalence or Morita equivalence, we wish to formally invert these classes of morphisms in $\dgcat$. By Remark \ref{YonedaisMorita}, inverting Morita equivalences amounts to identifying a DG-category with (the opposite of) its perfect derived category. This process is controlled using the theory of model categories. See \cite{Ho99} for background on model category theory.

\begin{remark} \label{hqemodel}
In \cite[Theorem 2.1]{Tab04} and \cite[Theorem 0.7]{tab07}, it is shown that $\dgcat$ admits two different model category structures. In the first, the weak equivalences are quasi-equivalences. We let $\Hqe$ denote the associated homotopy category i.e.\ the category of DG-categories with quasi-equivalences formally inverted. In the second, the weak equivalences are the Morita equivalences and we let $\Hmo$ denote the associated homotopy category. So there are functors 
\[
\dgcat \to \Hqe \to \Hmo
\]
where the first inverts all quasi-equivalences and the second inverts all Morita equivalences. 
\end{remark}

We will use the theory of model categories e.g.\ cofibrant and fibrant resolutions to work with $\Hmo$ and $\Hqe$. For more details see \cite{Ho99}. We will use a result of To\"en which describes the morphisms in $\Hqe$ and $\Hmo$ in terms of bimodules.

\begin{defn}
For DG-categories $\mathcal{A}, \mathcal{B}$, an $\mathcal{A}$--$\mathcal{B}$-bimodule $M \in \mathcal{C}(\mathcal{B}^{op} \otimes \mathcal{A})$ is right quasirepresentable if for every $a \in \mathcal{A}$, the right $\mathcal{B}$-module $M(-,a) \in \mathcal{C}(\mathcal{B}^{op})$ is quasi-isomorphic to a representable module $\mathcal{B}(-,b)$ for some $b \in \mathcal{B}$.
\end{defn}

A right quasirepresentable $\mathcal{A}$--$\mathcal{B}$ bimodule is also called a quasifunctor from $\mathcal{A}$ to $\mathcal{B}$.

\begin{theorem}[Corollary 4.8, \cite{toe06}] \label{toendescription}
For $\mathcal{A},\mathcal{B} \in \Hqe$, there is a bijection between $\oHom_{\Hqe}(\mathcal{A},\mathcal{B})$ and isomorphism classes of right quasirepresentable objects in $\mathcal{D}(\mathcal{B}^{op} \otimes \mathcal{A})$. 
\end{theorem}

Given a DG-functor $F\colon \mathcal{A} \to \mathcal{B}$ the functor $\dgcat \to \Hqe$ sends $F$ to the quasifunctor $M_F \in \mathcal{D}(\mathcal{B}^{op} \otimes \mathcal{A})$ given by $M_F(b,a) = \mathcal{B}(b,F(a))$. Indeed, this can be seen from the proof of Theorem 1.1 in \cite{CS15}.
\\

Similarly, there is a description of the morphism sets in $\Hmo$ as bimodules.

\begin{defn}
For DG-categories $\mathcal{A}, \mathcal{B}$, an $\mathcal{A}$--$\mathcal{B}$ bimodule $M \in \mathcal{C}(\mathcal{B}^{op} \otimes \mathcal{A})$ is right perfect if for every $a \in \mathcal{A}$, $M(-,a) \in \mathcal{C}(\mathcal{B}^{op})$ is a perfect $\mathcal{B}^{op}$-module.
\end{defn}

The following result follows from Theorem \ref{toendescription}. See Proposition 4.4.8 of \cite{To11}.

\begin{theorem} \label{rightperfectdesc}
For $\mathcal{A},\mathcal{B} \in \Hmo$, there is a bijection between $\oHom_{\Hmo}(\mathcal{A},\mathcal{B})$ and isomorphism classes of right perfect modules in $\mathcal{D}(\mathcal{B}^{op} \otimes \mathcal{A})$. 
\end{theorem}

We will be interested in the monoidal structure on $\Hmo$ and $\Hqe$.  

\begin{defn}
The tensor product $\mathcal{A} \otimes \mathcal{B}$ of DG-categories $\mathcal{A}, \mathcal{B}$ has objects given by pairs of objects in $\mathcal{A}$ and $\mathcal{B}$ and morphisms $\mathcal{A}(a,a') \otimes_k \mathcal{B}(b,b')$. 
\end{defn}

This makes $\dgcat$ a closed symmetric monoidal category with unit given by $k$ i.e.\ the DG-category with a single object and endomorphism complex $k$. The internal hom $\hom_{\dgcat}(-,-)$ is the DG-category of DG-functors. The symmetric monoidal and model structures are not compatible but as we are over a field, the tensor product preserves quasi-equivalences and so descends to 
\[
- \otimes - \colon \Hqe \times \Hqe \to \Hqe. 
\]
Over a commutative ring, the tensor product can also be derived to $\Hqe$. The next result follows from a more general statement about the simplicial enrichment associated to the model category. A direct proof is given in \cite{CS15}.

\begin{theorem}[Theorem 1.3, \cite{toe06}] \label{toeinternal} $(\Hqe, \otimes, k)$ is a closed symmetric monoidal category with internal hom $\hom_{\Hqe}(\mathcal{A},\mathcal{B})$ given by the DG-subcategory of $\underline{\mathcal{D}}(\mathcal{B}^{op} \otimes \mathcal{A})$ consisting of right quasirepresentables.  
\end{theorem}

A similar version of the following result is mentioned at the beginning of Section 7 in \cite{toe06}. 

\begin{prop} \label{Toenmodules}
For a DG-category $\mathcal{A}$, there is an isomorphism in $\Hqe$.
\[
\hom_{\Hqe}(\mathcal{A},\uD^b(k)) \simeq \uD_{\fd}(\mathcal{A}). 
\]
\end{prop}

\begin{proof}
Let $Q$ denote the cofibrant replacement functor on $\dgcat$ for the model structure in Remark \ref{hqemodel} whose homotopy category is $\Hqe$. By definition, the functor $\oHom_{\Hqe}(-\otimes \mathcal{A},\uD_{\fd}(k))$ is represented by $ \hom_{\Hqe}(\mathcal{A},\uD_{\fd}(k))$. We will show that $\uD_{\fd}(\mathcal{A})$ also represents this functor and deduce the equivalence. Let $\iso$ denote the isomorphism classes of objects in a category and $\Ho$ the homotopy category of a model category. Consider the following chain of isomorphisms.
\begin{align*}
\oHom_{\Hqe}(-,\uD_{\fd}(\mathcal{A})) & \simeq  \oHom_{\Hqe}(Q(-),\uD_{\fd}(\mathcal{A})) \\
& \simeq \iso \Ho(\hom_{\dgcat}(Q(-),\uC_{\fd}(\mathcal{A}))) \\
& \simeq \iso \Ho(\hom_{\dgcat}(Q(-), \hom_{\dgcat}(\mathcal{A}, \uC_{\fd}(k))))\\
& \simeq \iso \Ho(\hom_{\dgcat}(Q(-) \otimes \mathcal{A}, \uC_{\fd}(k))) \\
& \simeq \iso \Ho(\hom_{\dgcat}(Q(Q(-) \otimes \mathcal{A}), \uC_{\fd}(k)))\\
& \simeq \oHom_{\Hqe}(Q(Q(-) \otimes \mathcal{A}),\uD_{\fd}(k)) \\
& \simeq \oHom_{\Hqe}(- \otimes \mathcal{A},\uD_{\fd}(k))
\end{align*}
The first follows since there is a natural quasi-equivalence $Q(-) \to 1_{\Hqe}$. The second is Lemma 6.2 of \cite{toe06} applied to $M_0 = \uC_{\fd}(\mathcal{A})$. The third is the definition of $\uC_{\fd}(\mathcal{A})$ and the fourth is the underived tensor-hom adjunction. The fifth follows since $\Ho(\hom_{\dgcat}(\mathcal{B},\uC_{\fd}(k)) = \uD_{\fd}(\mathcal{B})$ and the quasi-equivalence $Q(\mathcal{B}) \to \mathcal{B}$  induces a quasi-equivalence $\uD_{\fd}(\mathcal{B}) \simeq \uD_{\fd}(Q(\mathcal{B}))$ for any DG-category $\mathcal{B}$. The sixth is another application of Lemma 6.2 of loc.\ cit.\ 
\end{proof}

The symmetric monoidal structure can be derived to $\Hmo$ by setting $\mathcal{A} \otimes \mathcal{B} \coloneq \uD^{\perf}(\mathcal{A} \otimes \mathcal{B})$. For more details see \cite[Section 4.4]{To11}. The following result follows from Theorem \ref{toeinternal} see \cite[Exercise 33]{To11}.

\begin{theorem}  \label{toeinternalhmo}
The symmetric monoidal category $\Hmo$ is closed with internal hom given by $\hom_{\Hmo}(\mathcal{A},\mathcal{B}) \simeq \hom_{\Hqe}(\mathcal{A},\mathcal{D}^{\perf}(\mathcal{B}))$ which is quasi-equivalent to the DG-subcategory of $\uD(\mathcal{B}^{op} \otimes \mathcal{A})$ consisting of right-perfect modules.
\end{theorem}

\begin{remark} 
The unit in the closed symmetric monoidal category $\Hmo$ is $k$ but as Morita equivalences are isomorphisms in $\Hmo$ we have that $k \simeq \uD^b(k)$. Therefore the duality functor on $\Hmo$ applied to a DG-category $\mathcal{A}$ is 
\[
\hom_{\Hmo}(\mathcal{A},k) \simeq \hom_{\Hmo}(\mathcal{A},\uD^b(k)) \simeq \hom_{\Hqe}(\mathcal{A},\uD^b(k))) \simeq \uD_{\fd}(\mathcal{A})
\]
using Theorem \ref{toeinternalhmo} and Proposition \ref{Toenmodules}. 
\end{remark}

\begin{remark}
Note that Proposition \ref{Toenmodules} could also be extracted directly from Theorem \ref{toeinternalhmo}. However we include the direct proof of Proposition \ref{Toenmodules} as it will be used in the proof of Lemma \ref{isofunctors}.
\end{remark}

\begin{remark}\label{isoclassesobj}
By Theorems \ref{toeinternalhmo} and \ref{rightperfectdesc}, there is a bijection between $\oHom_{\Hmo}(\mathcal{A},\mathcal{B})$ and isomorphism classes of objects in $H^0\hom_{\Hqe}(\mathcal{A},\mathcal{B})$.
\end{remark}

\subsection{Hochschild Cohomology}
\label{HHsectionfd}

Hochschild cohomology is a fundamental invariant appearing in algebra, geometry and topology. We take the following as our definition of the Hochschild cohomology of a DG-category.

\begin{defn}
The Hochschild cohomology of a DG-category $\mathcal{A}$ is the cohomology of the complex
\[
HH(\mathcal{A}) \coloneq \RHom_{\mathcal{A}^e}(\mathcal{A},\mathcal{A})
\]
where $\mathcal{A}$ is viewed as the diagonal bimodule.
\end{defn}

We will write $HH^\ast(\mathcal{A})$ for $H^\ast (HH(\mathcal{A}))$. Note that $HH(\mathcal{A})$ is an endomorphism object in a DG-category and so is a DG-algebra.

\begin{ex}
Viewing a $k$-algebra as a DG-category with one object, this clearly generalises the usual notion of Hochschild cohomology of an algebra. The Hochschild cohomology of a quasicompact separated scheme $X$ is defined as $\RHom_{X \times X}(\Delta, \Delta)$ where $\Delta$ denotes the pushforward of the structure sheaf along the diagonal map. The HKR Theorem of \cite{Ma01},\cite{Cal05} gives a geometric description of the Hochschild cohomology of $X$. It follows from \cite[Theorem 8.9]{toe06} that the Hochschild cohomology of $X$ coincides with the Hochschild cohomology of the DG-category $\uD^{\perf}(X)$.
\end{ex}

\begin{remark} There is also a Hochschild chain complex for DG-categories. See \cite{LVdB05}.
\end{remark}

We recall a result of To\"en showing that Hochschild cohomology can be encoded in $\Hqe$ and $\Hmo$ (as defined in Section \ref{duals}). 

\begin{theorem}[Corollary 8.1, \cite{toe06}] \label{toehh}
If $\mathcal{A}$ is a DG-category, then
\[
HH(\mathcal{A}) \simeq \hom_{\Hqe}(\mathcal{A},\mathcal{A})(1_\mathcal{A},1_{\mathcal{A}}) \simeq \hom_{\Hmo}(\mathcal{A},\mathcal{A})(1_\mathcal{A},1_{\mathcal{A}})
\]
the endomorphism DG-algebras of $1_\mathcal{A} \in \hom_{\Hqe}(\mathcal{A},\mathcal{A})$ and in $\hom_{\Hmo}(\mathcal{A},\mathcal{A})$
\end{theorem}

\begin{remark} \label{Hochsperf}
Since $HH(\mathcal{A}) \simeq HH(\mathcal{A}^{op})$, it follows from Remark \ref{YonedaisMorita} and Theorem \ref{toehh}, that $HH(\mathcal{A}) \simeq HH(\uD^{\perf}(\mathcal{A}))$.
\end{remark}

\section{Reflexive DG-categories via an Explicit Evaluation Map}
\label{KSreflsect}

Reflexive DG-categories were introduced in \cite{KS22} as those DG-categories $\mathcal{A}$ for which a certain evaluation map $\mathcal{A} \to \uD_{\fd}\uD_{\fd}(\mathcal{A})$ is a quasi-equivalence. We give a detailed construction of the functor $\uD_{\fd}(-): \Hmo \to \Hmo^{op}$ (where $\Hmo$ is as in Section \ref{duals}) and the evaluation natural transformation $\ev: 1_{\Hmo} \to \uD_{\fd}\uD_{\fd}(-)$ between functors $\Hmo \to \Hmo$ which is skipped in \cite{KS22}.

\begin{remark}
For a DG-category $\mathcal{A}$, let $i^\mathcal{A}$ denote the DG-functor given by the inclusion $\uD(\mathcal{A}) \subseteq \uC(\mathcal{A})$. By Remark \ref{enrichedcof}, there are DG-functors
\[
\begin{tikzcd}
\uD(\mathcal{A}) \arrow[r,hook,"i^{\mathcal{A}}"] & \uC(\mathcal{A})  \arrow[r,"{Q^{\mathcal{A}}}"] & \uD(\mathcal{A})
\end{tikzcd}
\]
and a DG-natural transformation $\varepsilon^{\mathcal{A}}\colon i^{\mathcal{A}} Q^{\mathcal{A}} \to 1_{\uC(\mathcal{A})}$ which is an objectwise quasi-isomorphism. Note these DG-functors restrict to cohomologically finite modules. We will use them to write down an explicit evaluation map. 
\end{remark}

To construct $\uD_{\fd}$ as a functor on $\Hmo$ and to write down an explicit evaluation map we will use the following technical lemma. Recall that $\Hqe$ is the precursor to $\Hmo$ where quasi-equivalences are inverted.

\begin{lemma} \label{naturalitylemma}
Suppose $F,G\colon \mathcal{A} \to \mathcal{B}$ are DG-functors and $\alpha\colon F \to G$ is a DG-natural transformation such that for every $a \in \mathcal{A}$, the map $\alpha_a \in \oHom_{H^0\mathcal{B}}(F(a),G(a))$ is an isomorphism in the category $H^0\mathcal{B}$. Then $F$ and $G$ represent the same morphism in $\Hqe$.
\end{lemma}

\begin{proof}
By Theorem \ref{toendescription}, the morphisms in $\Hqe$ corresponding to $F$ and $G$ are represented by quasifunctors are $M_F,
M_G \in \mathcal{C}(\mathcal{B}^{op} \otimes \mathcal{A})$ where $M_F(b,a) = \mathcal{B}(b,F(a))$ and $M_G(b,a) = \mathcal{B}(b,G(a))$. There are composition maps for any $b \in \mathcal{B}$ $a \in \mathcal{A}$
\[
 \mathcal{B}(b,F(a)) \otimes_k \mathcal{B}(F(a),G(a)) \to \mathcal{B}(b,G(a)).
\]
Composing with $\alpha_a \in \mathcal{B}(F(a),G(a))$ produces chain maps $M_F(b,a) \to M_G(b,a)$ which are natural and give rise to map $M_F \to M_G \in \mathcal{C}(\mathcal{B}^{op} \otimes \mathcal{A})$. Taking cohomology produces maps for every $i$
\[
 H^i\mathcal{B}(b,F(a)) \otimes_k H^0\mathcal{B}(F(a),G(a)) \to H^i\mathcal{B}(b,G(a)).
\]
Since $\alpha_a$ is an isomorphism in $H^0 \mathcal{B}$, composing with the class $\alpha_a$ represents induces an isomorphism $H^\ast M_F(b,a) \to H^\ast M_G(b,a)$. Therefore, $M_F \simeq M_G \in \mathcal{D}(\mathcal{B}^{op} \otimes \mathcal{A})$. Hence, the isomorphism class of $M_F$ equals the isomorphism class of $M_G$ and the corresponding elements of $\oHom_{\Hqe}(\mathcal{A},\mathcal{B})$ are equal.
\end{proof}

\begin{defn} \label{Dfdfunct}
Define a functor $\uD_{\fd}(-)\colon \dgcat \to \Hqe^{op}$ which sends $\mathcal{A} \mapsto \uD_{\fd}(\mathcal{A})$ and $F\colon \mathcal{A} \to \mathcal{B}$ to the morphism represented by the DG-functor
\[
\begin{tikzcd}
\uD_{\fd}(\mathcal{B}) \arrow[r,hook,"i^{\mathcal{B}}"] & \uC_{\fd}(\mathcal{B}) \arrow[r,"{\Res(F)}"] &\uC_{\fd}(\mathcal{A}) \arrow[r,"{Q^{\mathcal{A}}}"] & \uD_{\fd}(\mathcal{A})
\end{tikzcd}
\]
where $\Res(F)(M) = MF$. 
\end{defn}

\begin{prop}
The functor $\uD_{\fd}(-)\colon \dgcat \to \Hqe^{op}$ in Definition \ref{Dfdfunct} is well defined.
\end{prop}

\begin{proof}
We check $\uD_{\fd}(-)$ is functorial. For a DG-category $\mathcal{A}$, $1_{\mathcal{A}}$ is sent to the quasifunctor representing the DG-functor $Q^{\mathcal{A}}i^\mathcal{A}\colon \uD_{\fd}(\mathcal{A}) \to \uD_{\fd}(\mathcal{A})$. There is a DG-natural transformation $\varepsilon^{\mathcal{A}}\colon Q^{\mathcal{A}}i^\mathcal{A} \to 1_{\uD_{\fd}(\mathcal{A})}$ such that for any $M \in \uD_{\fd}(\mathcal{A})$ the map $Q^{\mathcal{A}}(M) \to M$ is a quasi-isomorphism. Therefore, the morphisms $\varepsilon^{\mathcal{A}}_M \in \oHom_{H^0\uD_{\fd}(\mathcal{A})}(Q^{\mathcal{A}}(M),M)$ are isomorphisms so by Lemma \ref{naturalitylemma}, $\uD_{\fd}(1_{\mathcal{A}}) = 1_{\uD_{\fd}(\mathcal{A})}$ as elements of $\oHom_{\Hqe}(\uD_{\fd}(\mathcal{A}),\uD_{\fd}(\mathcal{A}))$. Given DG-functors $F\colon \mathcal{A} \to \mathcal{B}$ and $G\colon \mathcal{B} \to \mathcal{C}$, there is a DG-natural transformation 
\[
\alpha\coloneq Q^\mathcal{A}\Res(F) \varepsilon^\mathcal{B}_{\Res(G)i^\mathcal{C}} \colon \uD_{\fd}(F) \uD_{\fd}(G)  \to \uD_{\fd}(GF).
\]
For any $M \in \uD_{\fd}(\mathcal{C})$, $\varepsilon^{\mathcal{B}}_{{\Res(G)}i^{\mathcal{C}}M}$ is a quasi-isomorphism in $\uC_{\fd}(\mathcal{B})$. Restricting along $F$ and applying $Q^{\mathcal{A}}$ preserves quasi-isomorphisms and so 
\[
\alpha_M \in \oHom_{H^0\uD_{\fd}(\mathcal{A})}(Q^{\mathcal{A}}(Q^{\mathcal{B}}(MG)F), Q^{\mathcal{A}}(MGF))
\]
is an isomorphism for every $M \in \mathcal{D}_{\fd}(\mathcal{C})$. Again by Lemma \ref{naturalitylemma} we are done. 
\end{proof}

\begin{prop} \label{Dfdfunctor}
$\uD_{\fd}(-)$ descends to functors $\Hqe \to \Hqe^{op}$ and $\Hmo \to \Hmo^{op}$.
\end{prop}

\begin{proof}
If $\uD_{\fd}$ descends to $\Hmo$, then it certainly descends to $\Hqe$ by Remark \ref{YonedaisMorita}. We must show that if $F\colon \mathcal{A} \to \mathcal{B}$ is a Morita equivalence, then $\uD_{\fd}(F)$ is an isomorphism in $\Hmo$. This is equivalent to showing that the DG-functor $\uD_{\fd}(F)\colon \uD_{\fd}(\mathcal{B}) \to \uD_{\fd}(\mathcal{A})$ is a quasi-equivalence. If $F$ is a Morita equivalence, then the unbounded restriction functor $\D(\mathcal{B}) \to \D(\mathcal{A})$ is an equivalence by definition. This equivalence preserves the perfects as they are the compact objects. Note that $M \in \mathcal{D}_{\fd}(\mathcal{A})$ if and only if $\RHom_{\mathcal{A}}(P,M) \in \mathcal{D}^b(k)$ for every $P \in \mathcal{D}^{\perf}(\mathcal{A})$. Therefore, the equivalence restricts to an equivalence $\mathcal{D}_{\fd}(\mathcal{B}) \simeq \mathcal{D}_{\fd}(\mathcal{A})$. 
\end{proof}

\begin{prop} \label{evdefn}
There are natural transformations $\ev\colon 1_{\Hqe} \to \uD_{\fd}\uD_{\fd}(-)$ and $\ev\colon 1_{\Hmo} \to \uD_{\fd}\uD_{\fd}(-)$ such that for each DG-category $\mathcal{A}$ and $a \in \mathcal{A}$, $M \in \uD_{\fd}(\mathcal{A})$, $\ev_{\mathcal{A},a}(M) \simeq M(a)$.

\end{prop}

\begin{proof}
For $\mathcal{A} \in \dgcat$, define
\[
\overline{\ev}_\mathcal{A}\colon \mathcal{A} \to \uC_{\fd}\uC_{\fd}(\mathcal{A})
\]
as follows. For $a \in \mathcal{A}$ set 
\[
\overline{\ev}_{\mathcal{A},a}\colon \uC_{\fd}(\mathcal{A})\to \uC_{\fd}(k); M \mapsto M(a);\ \beta \mapsto \beta_a.
\]
This defines a DG-functor and so $\overline{\ev}_{\mathcal{A},a} \in \uC_{\fd}\uC_{\fd}(\mathcal{A})$. Given some $f \in \mathcal{A}(a,b)^n$, set $\overline{\ev}_{\mathcal{A},f}\colon \overline{\ev}_{\mathcal{A},a} \to \overline{\ev}_{\mathcal{A},b}$ as the graded natural transformation whose value at $M \in \uC_{\fd}(\mathcal{A})$ is
\[
\overline{\ev}_{\mathcal{A},a}(M) = M(a) \xrightarrow{M(f)} M(b) = \overline{\ev}_{\mathcal{A},b}(M).
\]
Then $f \mapsto \overline{\ev}_{\mathcal{A},f}$ is a chain map, and this constructs a DG-functor $\overline{\ev}_{\mathcal{A}}$. Define $\ev_{\mathcal{A}}$ as the composition
\[
\ev_{\mathcal{A}}\colon \mathcal{A} \xrightarrow{\overline{\ev}_{\mathcal{A}}} \uC_{\fd}\uC_{\fd}(\mathcal{A}) \xrightarrow{\Res(i^\mathcal{A})} \uC_{\fd}\uD_{\fd}(\mathcal{A}) \xrightarrow{Q^{\uD_{\fd}(\mathcal{A})}} \uD_{\fd}\uD_{\fd}(\mathcal{A})
\]
and view it as a morphism in $\Hqe$ and $\Hmo$ by taking its image along the functors $\dgcat \to \Hqe \to \Hmo$. We now check naturality. First, note that if $\alpha\colon F \to G$ is a DG-natural transformation between DG-functors $F,G\colon \mathcal{A} \to \mathcal{B}$, there is a DG-natural transformation $\Res(\alpha)\colon \Res(F) \to \Res(G)$ defined by 
\[
\Res(\alpha)_M(a) \colon \Res(F)(M)(a) = MF(a) \xrightarrow{M(\alpha_a)} MG(a) = \Res(G)(M)(a) 
\]
for $M \in \uC(\mathcal{B})$ and $a \in \mathcal{A}$. Now, given $F\colon \mathcal{A} \to \mathcal{B} \in \Hqe$, note that since every object of $\Hqe$ is isomorphic to a cofibrant object, we can assume that $\mathcal{A}$ is cofibrant and so $F$ is a DG-functor. We will define a DG-natural transformation $\phi$ of the following form.
\[
\begin{tikzcd}[column sep = large]
\mathcal{A} \arrow[d,"\ev_{\mathcal{A}}"] \arrow[r,"F"] & \arrow[d,"\ev_{\mathcal{B}}"] \mathcal{B} \\ 
\uD_{\fd}\uD_{\fd}\mathcal{A} \arrow[r,"\uD_{\fd}\uD_{\fd}(F)",swap] \arrow[ru,Rightarrow,"\phi" description, shorten >= 2ex] &\uD_{\fd}\uD_{\fd}\mathcal{B} 
\end{tikzcd}
\]
First, let $\gamma^{\mathcal{A}} = \gamma\colon \Res(Q^\mathcal{A}) i^{\uD_{\fd}(\mathcal{A})} Q^{\uD_{\fd}(\mathcal{A})} \Res(i^\mathcal{A}) \to 1_{\uC_{\fd}\uC_{\fd}(\mathcal{A})}$ denote the DG-natural transformation defined as 
\[
\begin{tikzcd}[column sep = huge]
\uC_{\fd}\uC_{\fd}(\mathcal{A}) \arrow[r,"\Res(i^\mathcal{A})"] & \uC_{\fd}\uD_{\fd}(\mathcal{A}) \arrow[r,"i^{\uD_{\fd}(\mathcal{A})}Q^{\uD_{\fd}(\mathcal{A})}"{yshift = 5pt}] \arrow[dr,Rightarrow,"{\varepsilon^{\uD_{\fd}(\mathcal{A})}}" description] & \uC_{\fd}\uD_{\fd}(\mathcal{A}) \arrow[r,"{\Res(Q^\mathcal{A})}"] & \uC_{\fd}\uC_{\fd}(\mathcal{A}) \\
\uC_{\fd}\uC_{\fd}(\mathcal{A}) \arrow[r,"\Res(i^\mathcal{A})"] \arrow[rrrd,Rightarrow,"{\Res(\varepsilon^\mathcal{A})}" description,shorten <= 4ex,shorten >= 7ex] \arrow[u,equal] & \uC_{\fd}\uD_{\fd}(\mathcal{A}) \arrow[r,equal] \arrow[u,equal]  & \uC_{\fd}\uD_{\fd}(\mathcal{A}) \arrow[r,"\Res(Q^\mathcal{A})"] \arrow[u,equal] & \uC_{\fd}\uC_{\fd}(\mathcal{A}) \arrow[u,equal] \\
\uC_{\fd}\uC_{\fd}(\mathcal{A}) \arrow[u,equal]\arrow[rrr,equal] & & & \uC_{\fd}\uC_{\fd}(\mathcal{A}) \arrow[u,equal]
\end{tikzcd} \label{doubletrouble} 
\]
Then let $\phi$ be the DG-natural transformation
\[
\begin{tikzcd}[column sep = huge]
\mathcal{A} \arrow[r,equal] \arrow[d,"\overline{\ev}_{\mathcal{A}}"] & \mathcal{A} \arrow[d,"\overline{\ev}_{\mathcal{A}}"] \arrow[r,"F",swap] &  \mathcal{B}  \arrow[dd,"\overline{\ev}_{\mathcal{B}}"]  \arrow[ddr,"\ev_{\mathcal{B}}"] & 
\\
\uC_{\fd}\uC_{\fd}(\mathcal{A}) \arrow[r,equal] \arrow[d,equal] & \uC_{\fd}\uC_{\fd}(\mathcal{A}) \arrow[d,equal] & 
\\
\uC_{\fd}\uC_{\fd}(\mathcal{A}) \arrow[r] \arrow[ur,Rightarrow,"\gamma" description,shorten >= 1ex]  & \uC_{\fd}\uC_{\fd}(\mathcal{A}) \arrow[r,"\Res(\Res(F))",swap] & \uC_{\fd}\uC_{\fd}\mathcal{B}  \arrow[r,"Q^{\uD_{\fd}(\mathcal{B})} \Res(i^{\mathcal{B}})",swap] & \uD_{\fd}\uD_{\fd}(\mathcal{B})
\end{tikzcd}
\]
where the middle square commutes by naturality of $\overline{\ev}$ and the right triangle commutes by definition. By Lemma \ref{naturalitylemma}, it is enough to show that $\phi_a$ is a quasi-isomorphism. For any $a \in \mathcal{A}$, we claim that $\Res\Res(F) \gamma_a$ is a quasi-isomorphism of $\uC_{\fd}(\mathcal{B})$ modules. If $M \in \uC_{\fd}(\mathcal{B})$, then $(\Res\Res(F) \gamma_a)_M$ is the composition

\[
\begin{tikzcd}
\left(\Res\Res(F) \Res(Q^\mathcal{A}) i^{\uD_{\fd}(\mathcal{A})}Q^{\uD_{\fd}(\mathcal{A})} \Res(i^\mathcal{A}) \overline{\ev}_{\mathcal{A},a}\right)(M) \arrow[d,equal] \\
i^{\uD_{\fd}(\mathcal{A})} Q^{\uD_{\fd}(\mathcal{A})}(\overline{\ev}_{\mathcal{A},a} i^\mathcal{A})(Q^\mathcal{A}(MF)) \arrow[d,"{\varepsilon^{\uD_{\fd}(\mathcal{A})}_{\overline{\ev}_{\mathcal{A},a}i^{\mathcal{A}}}(Q^{\mathcal{A}}(MF))}"] \\
(\overline{\ev}_{\mathcal{A},a}i^\mathcal{A})(Q^\mathcal{A}(MF)) = i^\mathcal{A} Q^\mathcal{A}(MF)(a) \arrow[d,"{\varepsilon^{\mathcal{A}}_{MF}(a)}"] \\
MF(a)
\end{tikzcd}
\]
This is a quasi-isomorphism since   $\varepsilon^{\uD_{\fd}(\mathcal{A})}_{\overline{\ev}_{\mathcal{A},a}i^{\mathcal{A}}}$ is a quasi-isomorphism of $\uD_{\fd}(\mathcal{A})$-modules and $\varepsilon^{\mathcal{A}}_{MF}$
is a quasi-isomorphism of $\mathcal{A}$-modules. Hence, $\Res\Res (F) \gamma_a$ is a quasi-isomorphism for all $a$. Therefore, $\Res(i^\mathcal{B}) \Res \Res(F) \gamma_a$ is a quasi-isomorphism for all $a \in \mathcal{A}$ as restriction preserves quasi-isomorphisms. Therefore, 
\[
\phi_a = Q^{\uD_{\fd}(\mathcal{B})} \Res(i^\mathcal{B}) \Res \Res(F) \gamma_a 
\]
is a quasi-isomorphism since $Q^{\uD_{\fd}(\mathcal{B})}$ preserves quasi-isomorphisms. As they commute in $\Hqe$, the naturality diagrams also commute in $\Hmo$. 
\end{proof}

The following is a version of \cite[Definition 3.11]{KS22} extended to all DG-categories instead of just pretriangulated DG-categories. 

\begin{defn} \label{reflsemirefldefn}
A DG-category $\mathcal{A}$ is reflexive if the DG-functor
\[
\ev_{\mathcal{A}}: \mathcal{A} \to \uD_{\fd} \uD_{\fd}(\mathcal{A})
\]
is a Morita equivalence. We say that $\mathcal{A}$ is semireflexive if $\ev_{\mathcal{A}}$ is quasi-fully faithful.
\end{defn}

\begin{prop}\label{Moritainvrefl}
If $\mathcal{A}$ is Morita equivalent to $\mathcal{B}$, then $\mathcal{A}$ is reflexive if and only if $\mathcal{B}$ is reflexive.
\end{prop}

\begin{proof}
Note that $\ev_{\mathcal{A}}$ is a Morita equivalence if and only if it is an isomorphism in $\Hmo$. By assumption $\mathcal{A}$ is isomorphic to $\mathcal{B}$ in $\Hmo$. Since $\ev$ is a natural transformation between endofunctors on $\Hmo$, $\ev_{\mathcal{A}}$ is an isomorphism if and only if $\ev_{\mathcal{B}}$ is.
\end{proof}

\begin{remark}
There are dualities $\mathcal{D}^{\perf}(\mathcal{A}^{op}) \simeq \mathcal{D}^{\perf}(\mathcal{A})^{op}$ and $\mathcal{D}_{\fd}(\mathcal{A})^{op} \simeq \mathcal{D}_{\fd}(\mathcal{A}^{op})$ which can be composed with $\ev_\mathcal{A}$ to give equivalent formulations of reflexivity. These are studied in Lemma 3.10 of \cite{KS22} and in particular, it is shown that $\mathcal{A}$ is reflexive if and only if $\mathcal{A}^{op}$ is. Recall by Remark \ref{YonedaisMorita}, that $\mathcal{A}$ is Morita equivalent to $\uD^{\perf}(\mathcal{A})^{op}$ and so by Proposition \ref{Moritainvrefl}, $\mathcal{A}$ is reflexive if and only if $\uD^{\perf}(\mathcal{A})$ is reflexive.  
\end{remark}

\begin{prop}
If $\mathcal{A}$ is a DG-category, there is a triangulated functor
\[
\mathcal{D}^{\perf}(\mathcal{A})^{op} \to \D_{\fd}(\uD_{\fd}(\mathcal{A}))
\]
which sends $M \mapsto \RHom_{\mathcal{A}}(M,-)$ and $\mathcal{A}$ is reflexive if and only if it is an equivalence.
\end{prop}

\begin{proof}
By Proposition \ref{Moritainvrefl} and Remark \ref{YonedaisMorita}, $\mathcal{A}$ is reflexive if and only if $\mathcal{D}^{\perf}(\mathcal{A})^{op}$ is. So $\mathcal{A}$ is reflexive if and only if the DG-functor $\ev_{\uD^{\perf}(\mathcal{A})^{op}}$ is a Morita equivalence. Any DG-functor between pretriangulated idempotent complete DG-categories is a Morita equivalence if and only if it is a quasi-equivalence. This occurs if and only if it induces an equivalence of categories on $H^0$. Therefore $\mathcal{A}$ is reflexive if and only if the following triangulated functor is an equivalence
\[
H^0(\ev_{\uD^{\perf}(\mathcal{A})^{op}}): \mathcal{D}^{\perf}(\mathcal{A})^{op} \to \D_{\fd}(\uD_{\fd}(\uD^{\perf}(\mathcal{A})^{op})) 
\]
The Yoneda embedding $\mathcal{A} \hookrightarrow \mathcal{D}^{\perf}(\mathcal{A})^{op}$ is a Morita equivalence and so induces a quasi-equivalence $\uD_{\fd}(\mathcal{A}) \simeq \uD_{\fd}(\uD^{\perf}(\mathcal{A})^{op})$. Therefore $\mathcal{A}$ is reflexive if and only if the composite
\[ 
\mathcal{D}^{\perf}(\mathcal{A})^{op} \to \D_{\fd}(\uD_{\fd}(\uD^{\perf}(\mathcal{A})^{op})) \simeq \D_{\fd}(\uD_{\fd}(\mathcal{A}))
\]
is an equivalence. The equivalence is given by restricting along the map $\uD_{\fd}(\mathcal{A}) \to \uD_{\fd}(\uD^{\perf}(\mathcal{A})^{op}): N \mapsto \RHom_{\mathcal{A}}(-,N)$. It follows that the composite sends $M$ to $\RHom_{\mathcal{A}}(M,-)$ as claimed.
\end{proof}

\begin{remark}
By Lemma 3.13 in \cite{KS22}, if $\mathcal{A}$ is a reflexive DG-category, then so is $\uD_{\fd}(\mathcal{A})$. This will also follow from Proposition \ref{reflexivechar} and Theorem \ref{samedef}.
\end{remark}

\begin{ex}
Many commonly studied DG-categories in algebra, geometry and topology turn out to be reflexive. For example, proper connective DG-algebras and proper schemes (see \cite[Section 6]{KS22} or for proofs over any field: \cite[Section 5]{Goo24c} and \cite[Corollary 3.4.3]{BGO25}). Many other examples are studied and surveyed in \cite{BGO25}.
\end{ex}

\subsection{Proper DG-categories}

In this subsection we generalise Lemma 3.14 of \cite{KS22} which shows that for proper DG-categories, the evaluation functor is related to the Yoneda embedding. 

\begin{remark}
Suppose $\mathcal{A}$ is a proper DG-category and there is a DG-subcategory $\mathcal{A}^{op} \subseteq \mathcal{B} \subseteq \uD_{\fd}(\mathcal{A})$, for example $\mathcal{B} = \mathcal{D}^{\perf}(\mathcal{A})$. The Yoneda embedding of $\mathcal{B}$ restricted to $\mathcal{A}$ factors as
\[
\begin{tikzcd}
\mathcal{B}^{op} \arrow[r,hook,"Y"]  & \uD(\mathcal{B})\\
\mathcal{A} \arrow[r,hook,"Y\mid_{\mathcal{A}}"] \arrow[u,hook] & \uD_{\fd}(\mathcal{B}) \arrow[u,hook]
\end{tikzcd}
\] 
since $\RHom_{\mathcal{A}}(\mathcal{A}(a,-),b) \in \mathcal{D}^b(k)$ for all $a \in \mathcal{A}$ and $b \in \mathcal{B}$. 
\end{remark}

\begin{lemma} \label{yonedalemma}
Suppose $\mathcal{A}$ is a proper DG-category and $
\mathcal{A}^{op} \subseteq \mathcal{B} \subseteq \uD_{\fd}(\mathcal{A})$, and denote the inclusion by $j\colon \mathcal{B} \hookrightarrow \uD_{\fd}(\mathcal{A})$. Then the composite  
\[
\mathcal{A} \xrightarrow{\ev_{\mathcal{A}}} \uD_{\fd}\uD_{\fd}(\mathcal{A}) \xrightarrow{\uD_{\fd}(j)} \uD_{\fd}(\mathcal{B})
\]
and the restricted Yoneda embedding $Y\mid_{\mathcal{A}}\colon \mathcal{A} \to \uD_{\fd}(\mathcal{B})$
represent the same morphism in $\Hqe$.
\end{lemma}

\begin{proof}
Consider the diagram below.
\[
\begin{tikzcd}[column sep = large]
\mathcal{A} \arrow[dr,"Y\mid_{\mathcal{A}}",hook,swap] \arrow[r,"\overline{\ev}_{\mathcal{A}}"] & \uC_{\fd}\uC_{\fd}(\mathcal{A}) \arrow[r,"\Res(i^{\mathcal{A}})"]& \uC_{\fd}\uD_{\fd}(\mathcal{A})  \arrow[r,"Q^{\uD_{\fd}(\mathcal{A})}"]  \arrow[dr,equal,bend right,""{name = e1}]  & \uD_{\fd}\uD_{\fd}(\mathcal{A}) \arrow[d,"i^{\uD_{\fd}(\mathcal{A})}"] \arrow[from = 1-4,to = e1, Rightarrow, "\varepsilon^{\uD_{\fd}(\mathcal{A})}"] \\
&   \uD_{\fd}(\mathcal{B}) \arrow[rrdd,equal,""{name = e2},bend right] \arrow[rrd,hook,"i^{\mathcal{B}}"name={eyeb},swap] &  & \uC_{\fd}\uD_{\fd}(\mathcal{A}) \arrow[d,"\Res(j)"] \\
 & & & \uC_{\fd}(\mathcal{B})  \arrow[d,"Q^{\mathcal{B}}"] \\
 & & & \uD_{\fd}(\mathcal{B}) \arrow[from = e1, to = eyeb,Rightarrow,shorten >= 1ex,shorten <= 1ex] \arrow[from = eyeb,to = e2,Rightarrow,"\varepsilon^{\mathcal{B}}",shorten <= 1ex]
\end{tikzcd}
\] 
There is nothing derived about the pentagon in this diagram. Both DG-functors $\mathcal{A} \to \uC_{\fd}(\mathcal{B})$ can be explicitly described. For example, on objects the top route sends $a \in \mathcal{A}$ to the $\mathcal{B}$-module 
\[
\mathcal{B} \to \mathcal{D}^b(k); b \mapsto b(a); \beta \mapsto \beta_a
\]
The bottom sends $a$ to 
\[
\mathcal{B} \to \mathcal{D}^b(k); b \mapsto \mathcal{B}(\mathcal{A}(a,-),b); \beta \mapsto \mathcal{B}(\mathcal{A}(a,-),b) ; 
\]
By the  $\mathcal{C}(k)$-enriched Yoneda lemma, there is a natural isomorphism between these two DG-functors $\mathcal{A} \to \uC_{\fd}(\mathcal{B})$ as indicated. Since $\varepsilon^{\mathcal{B}}$ is a pointwise quasi-isomorphism, it remains to see that $Q^{\mathcal{B}} \Res(j) \varepsilon^{\uD_{\fd}(\mathcal{A})}$ is too. This holds since $\varepsilon^{\uD_{\fd}(\mathcal{A})}$ is a pointwise quasi-isomorphism which is preserved by $\Res(j)$ and so is sent to a pointwise quasi-isomorphism by $Q^{\mathcal{B}}$. So by Lemma \ref{naturalitylemma} we are done.
\end{proof}

We recover Lemma 3.14 of \cite{KS22} by taking $\mathcal{B} = \uD_{\fd}(\mathcal{A})$:

\begin{prop} \label{propersemiref}
Any proper DG-category is semireflexive.
\end{prop}

\section{Reflexive DG-categories as Reflexive Objects} \label{reflDGsec}

The goal of this section is to show that reflexive DG-categories are the reflexive objects in the closed symmetric monoidal category $\Hmo$. Recall that $\Hmo$ is defined in Section \ref{duals} and $\ev$ is constructed in Proposition \ref{evdefn}.

\begin{prop} \label{adjunctionDfd}
There is an adjunction
\[ \begin{tikzcd}
\uD_{\fd}(-)\colon \Hmo \arrow[r,shift left] & \arrow[l,shift left] \Hmo^{op} \colon \uD_{\fd}(-)
\end{tikzcd} \]
whose unit and counit are both given by the natural transformation $\ev$.
\end{prop}

\begin{proof}
In order to prove that $\uD_{\fd}(-) \dashv \uD_{\fd}(-)$ is an adjunction, we will prove that the corresponding triangle identities holds with  candidate unit and counit given by 
$\ev_{\uD_{\fd}(\mathcal{A})}$. Both triangle identities state that $\uD_{\fd}(\ev_{\mathcal{A}}) \ev_{\uD_{\fd}}(\mathcal{A}) = 1_{\uD_{\fd}(\mathcal{A})}$ in $\Hmo$. To do this, we construct a DG-natural transformation of the form
\[
\begin{tikzcd}[column sep = large, row sep = large]
\uD_{\fd}(\mathcal{A}) \arrow[r,"\ev_{\uD_{\fd}(\mathcal{A})}"] \arrow[dr,equal,""{name = e},bend right] & \uD_{\fd}\uD_{\fd}\uD_{\fd}(\mathcal{A}) \arrow[from=1-2,to=e,Rightarrow,"\phi"] \arrow[d,"\uD_{\fd}(\ev_{\mathcal{A}})"]\\
 & \uD_{\fd}(\mathcal{A})
\end{tikzcd}
\]
Define $\phi$ as the following DG-natural transformation.
\[
\begin{tikzcd}[row sep = large,column sep = large]
\uD_{\fd}\mathcal{A} \arrow[r,"\overline{\ev}_{\uD_{\fd}\mathcal{A}}"] \arrow[dddrr,"i^\mathcal{A}"] \arrow[dddddrrr,equal,bend right,""{name = e1}] & \uC_{\fd}\uC_{\fd} \uD_{\fd}(\mathcal{A}) \arrow[r,"\Res(i^{\uD_{\fd}(\mathcal{A})})"{yshift = 3pt}] \arrow[ddrr,equal,bend right,""{name=e2}] & \uC_{\fd}\uD_{\fd}\uD_{\fd}\mathcal{A}   \arrow[r,"Q^{\uD_{\fd}\uD_{\fd}(\mathcal{A})}" {yshift = 3pt}] \arrow[dr,equal,bend right,""{name=e3}] & \uD_{\fd}\uD_{\fd}\uD_{\fd}\mathcal{A} \arrow[d,"i^{\uD_{\fd}\uD_{\fd}(\mathcal{A})}"] \arrow[from=1-4,to =e3,Rightarrow,"\varepsilon^{\uD_{\fd}\uD_{\fd}(\mathcal{A})}"description,swap,shorten >= 1ex] \\ 
& &  & \uC_{\fd}\uD_{\fd}\uD_{\fd}\mathcal{A} \arrow[d,"\Res(Q^{\uD_{\fd}(\mathcal{A})})"] \\
& &  & \uC_{\fd}\uC_{\fd}\uD_{\fd}\mathcal{A} \arrow[d,"\Res\Res(i^{\mathcal{A}}))"] \\
& & \uC_{\fd}(\mathcal{A}) \arrow[from=4-3,to=e1,Rightarrow,"\varepsilon^{\mathcal{A}}"description] \arrow[r,"\overline{\ev}_{\uC_{\fd}(\mathcal{A})}"] \arrow[dr,equal]& \uC_{\fd}\uC_{\fd}\uC_{\fd}\mathcal{A} \arrow[d,"\Res(\overline{\ev}_{\mathcal{A}})"] \\
& & & \uC_{\fd}\mathcal{A} \arrow[d,"Q^{\mathcal{A}}"] \\
& & & \uD_{\fd}\mathcal{A} \arrow[from=e3,to=e2,Rightarrow,"\Res(\varepsilon^{\uD_{\fd}(\mathcal{A})})"description,shorten <= 2ex]
\end{tikzcd}
\]
The pentagon commutes by naturality of $\overline{\ev}$ and the unlabelled triangle is the triangle identity for the adjunction $\uC_{\fd}(-) \dashv \uC_{\fd}(-)$. Let $\beta$ denote the composite of the first three non-trivial DG-natural transformations in $\phi$, so that 
\[
\beta \colon \Res(\overline{\ev}_{\mathcal{A}}) \Res\Res(i^{\mathcal{A}}) \Res(Q^{\uD_{\fd}(\mathcal{A})}) i^{\uD_{\fd}\uD_{\fd}\mathcal{A}} Q^{\uD_{\fd}\uD_{\fd}\mathcal{A}}  \Res(i^{\uD_{\fd}(\mathcal{A})}) \overline{\ev}_{\uD_{\fd}\mathcal{A}} \to i^{\mathcal{A}}
\]
and $\phi = \varepsilon^{\mathcal{A}} Q^\mathcal{A}\beta$.
Then, given $M \in \uD_{\fd}(\mathcal{A})$ and $a \in \mathcal{A}$, $\beta_M(a)$ is the morphism in $\mathcal{D}^b(k)$ given by

\[
\begin{tikzcd}
(\Res(\overline{\ev}_{\mathcal{A}}) \Res\Res(i^{\mathcal{A}}) \Res(Q^{\uD_{\fd}(\mathcal{A})}) i^{\uD_{\fd}\uD_{\fd}\mathcal{A}} Q^{\uD_{\fd}\uD_{\fd}\mathcal{A}}  \Res(i^{\uD_{\fd}(\mathcal{A})}) \overline{\ev}_{\uD_{\fd}(\mathcal{A}),M})(a) \arrow[d,equal] \\
i^{\uD_{\fd}\uD_{\fd}(\mathcal{A})} Q^{\uD_{\fd}\uD_{\fd}(\mathcal{A})}( \Res(i^{\uD_{\fd}\mathcal{A}}) \overline{\ev}_{\uD_{\fd}(\mathcal{A}),M})(Q^{\uD_{\fd}(\mathcal{A})}(\Res(i^{\mathcal{A}}) \overline{\ev}_{\mathcal{A},a})) \arrow[d,"\varepsilon^{\uD_{\fd}\uD_{\fd}(\mathcal{A})}_{\Res(i^{\uD_{\fd}\mathcal{A}}) \overline{\ev}_{\uD_{\fd}\mathcal{A},M}} (Q^{\uD_{\fd}(\mathcal{A})}(\Res(i^{\mathcal{A}}) \overline{\ev}_{\mathcal{A},a}))"] \\
\Res(i^{\uD_{\fd}\mathcal{A}}) \overline{\ev}_{\uD_{\fd}\mathcal{A},M}(Q^{\uD_{\fd}(\mathcal{A})}(\Res(i^{\mathcal{A}}) \overline{\ev}_{\mathcal{A},a})) = (i^{\uD_{\fd}\mathcal{A}} Q^{\uD_{\fd}\mathcal{A}})(\Res(i^{\mathcal{A}}) \overline{\ev}_{\mathcal{A},a})(M)  \arrow[d,"\varepsilon^{\uD_{\fd}(\mathcal{A})}_{\Res(i^{\mathcal{A}})\overline{\ev}_{\mathcal{A},a}}(M)"] \\
\Res(i^{\mathcal{A}}) \overline{\ev}_{\mathcal{A},a}(M) = i^{\mathcal{A}}(M)(a)
\end{tikzcd}
\]
This is a quasi-isomorphism as both $\varepsilon$'s are pointwise quasi-isomorphisms. Therefore, $\beta_M$ is a quasi-isomorphism of $\mathcal{A}$-modules and so is $Q^{\mathcal{A}}(\beta_M)$. So the DG-natural transformation $\phi$ evaluated at $M$ is   
\[
\uD_{\fd}(\ev_{\mathcal{A}})\ev_{\uD_{\fd}(\mathcal{A})}(M) \xrightarrow[\sim]{Q^{\mathcal{A}}(\beta_M)} Q^{\mathcal{A}}i^{\mathcal{A}}(M) \xrightarrow[\sim]{\varepsilon^{\mathcal{A}}_M} M
\]
which is a quasi-isomorphism. Therefore, $\uD_{\fd}(\ev_{\mathcal{A}}) \ev_{\mathcal{D}_{\fd}(\mathcal{A})} = 1_{\uD_{\fd}(\mathcal{A})}$ as elements of $\oHom_{\Hmo}(\uD_{\fd}(\mathcal{A}),\uD_{\fd}(\mathcal{A}))$ by Lemma \ref{naturalitylemma}. Hence the triangle identities hold and there is an adjunction $\uD_{\fd}(-) \dashv \uD_{\fd}(-)$ with unit and counit $\ev$.
\end{proof}

\begin{remark}
The adjunction of Proposition \ref{adjunctionDfd}, shows that there are natural isomorphisms similar to those of Equation (1) of Section \ref{sectreflmon}. 
\end{remark}

In the next lemma we show that the isomorphism of Proposition \ref{Toenmodules} is natural.

\begin{lemma}\label{isofunctors}
There is an isomorphism $\uD_{\fd}(-) \simeq \hom_{\Hqe}(-,\uD^b(k))$ of functors  $\Hqe \to \Hqe^{op}$ and an isomorphism $\uD_{\fd}(-) \simeq \hom_{\Hmo}(-,k)$ of functors $\Hmo \to \Hmo^{op}$.
\end{lemma}

\begin{proof}
Recall from Proposition \ref{Toenmodules}, there is an isomorphism $\hom_{\Hqe}(\mathcal{A},\uD^b(k)) \simeq \uD_{\fd}(\mathcal{A})$ in $\Hqe$. It remains to check this isomorphism is natural. It is enough to prove the first statement and as in the proof of Proposition \ref{evdefn} we need only check naturality for DG-functors $F\colon \mathcal{A} \to \mathcal{B}$. For any cofibrant $X \in \Hqe$, there is an isomorphism by Lemma 6.2 in \cite{toe06}
\begin{equation} \label{Toen'sdude} 
\oHom_{\Hqe}(X, \uD_{\fd}(\mathcal{A})) \simeq \iso \Ho( \hom_{\dgcat}(X,\uC_{\fd}(\mathcal{A}))).
\end{equation}
using the same notation as the proof of Proposition \ref{Toenmodules}. We first show the following diagram commutes
\[
\begin{tikzcd}[row sep = large, column sep = large]
\oHom_{\Hqe}( X, \uD_{\fd}(\mathcal{B})) \arrow[d,"\uD_{\fd}(F) \circ"] \arrow[r,"\sim"] &  \iso \Ho(\hom_{\dgcat}(X,\uC_{\fd}(\mathcal{B}))) \arrow[d,"{\hom_{\dgcat}(X,\Res(F))}"] \\ 
\oHom_{\Hqe}( X, \uD_{\fd}(\mathcal{A})) \arrow[r,"\sim"] & \iso \Ho(\hom_{\dgcat}(X,\uC_{\fd}(\mathcal{A})))
\end{tikzcd}
\]
where the right vertical functor is well defined on the homotopy categories since $\Res(F)$ preserves weak equivalences in $\uC_{\fd}(\mathcal{B})$ and the weak equivalences in $\hom_{\dgcat}(X,\uC_{\fd}(\mathcal{B}))$ are defined pointwise. Any $f\colon X \to \uD_{\fd}(\mathcal{B}) \in \Hqe$ can be modelled as an actual DG-functor since $X$ is cofibrant. The proof of Lemma 6.2 in loc.\ cit.\ shows that $f$ is sent along the top right composition to  
\[
X \xrightarrow{f} \uD_{\fd}(\mathcal{B}) \xrightarrow{i^{\mathcal{A}}} \uC_{\fd}(\mathcal{B}) \xrightarrow{\Res(F)} \uC_{\fd}(\mathcal{A}).
\]
The image of $f$ along the bottom left composition is 
\[
X \xrightarrow{f} \uD_{\fd}(\mathcal{B}) \xrightarrow{i^{\mathcal{A}}} \uC_{\fd}(\mathcal{B}) \xrightarrow{\Res(F)} \uC_{\fd}(\mathcal{A}) \xrightarrow{Q^{\mathcal{A}}} \uD_{\fd}(\mathcal{A}) \xrightarrow{i^{\mathcal{A}}} \uC_{\fd}(\mathcal{A}).  
\]
Then, since $\varepsilon\colon i^{\mathcal{A}}Q^{\mathcal{A}} \to 1_{\uC_{\fd}(\mathcal{A})}$ is a pointwise quasi-isomorphism, these two objects are equal in $\iso \Ho(\hom_{\dgcat}(X,\uC_{\fd}(\mathcal{A}))$, as required. Next, note that the following diagram commutes
\[
\begin{tikzcd}[row sep = large, column sep = large]
\iso \Ho(\hom_{\dgcat}(X,\uC_{\fd}(\mathcal{B}))) \arrow[r,"\sim"] \arrow[d,"{\hom_{\dgcat}(X,\ \Res(F))}"]& \iso \Ho(\hom_{\dgcat}(X \otimes \mathcal{B},\uC_{\fd}(k)) \arrow[d,"{\hom_{\dgcat}(X \otimes F,\ \uC_{\fd}(k))}"] \\
\iso \Ho(\hom_{\dgcat}(X,\uC_{\fd}(\mathcal{A}))) \arrow[r,"\sim"] & \iso \Ho(\hom_{\dgcat}(X \otimes \mathcal{A},\uC^b(k)) 
\end{tikzcd}
\]
as it commutes at the non-derived level by naturality of the tensor-hom adjunction. Lemma 6.2 of \cite{toe06} gives us naturality of the isomorphism in Equation (\ref{Toen'sdude}) in $X$. Hence, the following diagram commutes.
\[
\begin{tikzcd}[row sep = large, column sep = large]
\iso \Ho(\hom_{\dgcat}(X \otimes \mathcal{B}, \mathcal{C}^b(k))) \arrow[d,"{\hom_{\dgcat}(X \otimes F,\ \mathcal{C}^b(k))}"]  \arrow[r,"\sim"] & \oHom_{\Hqe}(X \otimes \mathcal{B},\mathcal{D}^b(k)) \arrow[d," \circ (X \otimes F)"] \\
\iso \Ho (\hom_{\dgcat}(X \otimes \mathcal{A}, \uC^b(k))) \arrow[r,"\sim"] & \oHom_{\Hqe}(X \otimes \mathcal{A},\mathcal{D}^b(k))
\end{tikzcd}
\]
Finally, note that the square below commutes by definition of the functoriality of an internal hom.
\[
\begin{tikzcd}[row sep = large, column sep = large]
\oHom_{\Hqe}(X \otimes \mathcal{B},\mathcal{D}^b(k)) \arrow[d,"\circ (X \otimes F)"] \arrow[r,"\sim"] & \oHom_{\Hqe}(X,\hom_{\Hqe}(\mathcal{B},\mathcal{D}^b(k)))  \arrow[d,"{\hom_{\Hqe}(F,\mathcal{D}^b(k)) \circ }"] \\ 
\oHom_{\Hqe}(X \otimes \mathcal{A},\mathcal{D}^b(k)) \arrow[r,"\sim"] & \oHom_{\Hqe}(X,\hom_{\Hqe}(\mathcal{A},\mathcal{D}^b(k))) 
\end{tikzcd}
\]
Pasting the previous four diagrams together states that the image under Yoneda embedding of the following diagram commutes.
\[
\begin{tikzcd}[row sep = large, column sep = large]
\uD_{\fd}(\mathcal{B}) \arrow[d,"{\uD_{\fd}(F)}"] \arrow[r,"\sim"] & \hom_{\Hqe}(\mathcal{B},\uD_{\fd}(k)) \arrow[d,"{\hom_{\Hqe}(F,\mathcal{D}^b(k))}"] \\
\uD_{\fd}(\mathcal{\mathcal{A}}) \arrow[r,"{\sim}"] & \hom_{\Hqe}(\mathcal{B},\mathcal{D}^b(k)) 
\end{tikzcd} 
\]
Therefore, the diagram commutes in $\Hqe$.
\end{proof}

 Recall that $\Hmo$ is a closed symmetric monoidal category by Theorem \ref{toeinternalhmo} and Lemma \ref{isofunctors} states that the duality functor on $\Hmo$ is isomorphic to $\uD_{\fd}(-)$.

\begin{theorem} \label{samedef}
A DG-category $\mathcal{A}$ is reflexive if and only if it is a reflexive object in the closed symmetric monoidal category $\Hmo$.
\end{theorem}

\begin{proof}
As in Section \ref{sectreflmon}, the symmetric monoidal structure on $\Hmo$ implies there is an adjunction $\hom_{\Hmo}(-,k) \dashv \hom_{\Hmo}(-,k)$ with unit denoted $\eval$. By Proposition \ref{adjunctionDfd} there is an adjunction $\uD_{\fd}(-) \dashv \uD_{\fd}(-)$ with unit $\ev$ as constructed in Proposition \ref{evdefn}. By Lemma \ref{isofunctors}, there is an isomorphism of functors $\hom_{\Hmo}(-,k) \simeq \uD_{\fd}(-)$. So by uniqueness of adjunctions, the units must coincide up to isomorphism and the composition
\[
\begin{tikzcd}
\mathcal{A} \xrightarrow{\ev_{\mathcal{A}}} \uD_{\fd}\uD_{\fd}(\mathcal{A}) \simeq \hom_{\Hqe}(\hom_{\Hqe}(\mathcal{A},\mathcal{D}^b(k)),\mathcal{D}^b(k))
\end{tikzcd}
\]
is $\eval_{\mathcal{A}}$. Therefore $\ev_{\mathcal{A}}$ is an isomorphism if and only $\eval_{\mathcal{A}}$ is an isomorphism. 
\end{proof}

The following consequence will be of great use. It is an enhanced version of Corollary 3.16 in \cite{KS22}. Also note we only require $\mathcal{B}$ to be reflexive.

\begin{cor} \label{dudeequiv}
Suppose $\mathcal{B}$ is a reflexive (semireflexive) DG-category and $\mathcal{A}$ is any DG-category. Then the morphism in $\Hmo$ 
\[
\hom_{\Hmo}(\mathcal{A},\mathcal{B}) \to \hom_{\Hmo}(\uD_{\fd}(\mathcal{B}),\uD_{\fd}(\mathcal{A}))
\]
is a quasi-equivalence (quasi-fully faithful) and the map 
\[
\oHom_{\Hmo}(\mathcal{A},\mathcal{B}) \to \oHom_{\Hmo}(\uD_{\fd}(\mathcal{B}),\uD_{\fd}(\mathcal{A}))
\]
is an isomorphism (monomorphism).
\end{cor}

\begin{proof}

If $\mathcal{B}$ is reflexive, this follows immediately from Proposition \ref{reflequiv} and Theorem \ref{samedef}. If $\mathcal{B}$ is semireflexive, $\ev_{\mathcal{B}}$ is quasi-fully faithful. As in the proof of Theorem \ref{samedef}, $\eval_{\mathcal{B}}$ is quasi-fully faithful. Then by Corollary 6.6 in \cite{toe06}, the morphism $\hom_{\Hqe}(\mathcal{A},\eval_{\mathcal{B}})$ is also quasi-fully faithful. By Diagram \ref{evaladjointdiagram} in Remark \ref{enricheddual}, the morphism in the statement is quasi-fully faithful. The second result follows from the first by applying $H^0$ and restricting to isomorphism classes of objects as in Remark \ref{isoclassesobj}.
\end{proof}


\section{Applications to Hochschild Cohomology and Derived Picard Groups}
\label{HHreflsec}

In this section we apply Theorem \ref{samedef} to study Hochschild cohomology and Derived Picard groups. We recall that if $\mathcal{A}$ and $\mathcal{B}$ are DG-categories then by Remark \ref{enricheddual} and the Lemma \ref{isofunctors}, there are natural maps 
\begin{equation} \label{thedude}
\hom_{\Hmo}(\mathcal{A},\mathcal{B}) \to \hom_{\Hmo}(\uD_{\fd}(\mathcal{B}),\uD_{\fd}(\mathcal{A})).
\end{equation}
in $\Hmo$. By Remark \ref{isoclassesobj}, applying $H^0$ and restricting to isomorphism classes of objects gives the action of the functor $\uD_{\fd}(-)$ on morphisms in $\Hmo$.

\subsection{Hochschild Cohomology} 

\begin{theorem} \label{semireflhhiso}
If $\mathcal{A}$ is a semireflexive DG-category, then there is a quasi-isomorp\-hism of DG-algebras
\[
HH(\mathcal{A}) \simeq HH(\uD_{\fd}(\mathcal{A})).
\]

\end{theorem}

\begin{proof}
Since the morphism in Equation (\ref{thedude}) lifts the action of the functor $\uD_{\fd}(-)$, it sends $1_{\mathcal{A}}$ to $1_{\uD_{\fd}(\mathcal{A})}$. The morphism in Equation (4) is quasi-fully faithful by Corollary \ref{dudeequiv} and so it induces an isomorphism on $HH^\ast$ by Theorem \ref{toehh}.
\end{proof}

\begin{remark} We note that similar Hochschild cohomology isomorphisms are already known.
\begin{enumerate}
\item In Theorem 4.4.1 of \cite{LVdB05}, it is shown that $HH^\ast(\mathcal{A})$ is isomorphic to $HH^\ast(\mathcal{B})$ for any subcategory $\mathcal{A}^{op} \subseteq \mathcal{B} \subseteq \uD(\mathcal{A})$. In the non-proper case this does not apply to our situation since $\uD_{\fd}(\mathcal{A})$ does not always meet this condition. 

\item In \cite{kel19}, it was shown that Koszul duality produces isomorphisms on Hochschild cohomology.
\end{enumerate}
\end{remark}

\begin{ex}
The Hochschild homology of a DG-category $\mathcal{A}$ is the homology of the chain complex $\mathcal{A} \otimes^{\mathbb{L}}_{\mathcal{A}^e} \mathcal{A}$. The analogous version of Theorem \ref{semireflhhiso} for Hochschild homology does not hold. The algebra $k[x]/x^2$ is reflexive and $\uD_{\fd}(k[x]/x^2) \simeq \uD^{\perf}(A)$ where $A$ is the DG-algebra $k[t]$ with $\lvert t \rvert =1$. But one can compute that $HH_\ast(A)$ is non-zero in negative (homological) degrees. This cannot occur for the Hochschild homology of an algebra. 
\end{ex}

If $\mathcal{A}$ is proper, then it is semireflexive (Lemma 3.14 of \cite{KS22} or Proposition \ref{propersemiref}) and so by Theorem \ref{semireflhhiso}, there is a quasi-isomorphism $HH(\mathcal{A}) \simeq HH(\uD_{\fd}(\mathcal{A}))$. We note that this can be extended to any intermediary category.

\begin{theorem} \label{properhhinv}
Suppose $\mathcal{A}$ is a proper DG-category and $\mathcal{A}^{op} \subseteq \mathcal{B} \subseteq \uD_{\fd}(\mathcal{A})$. Then there is a quasi-isomorphism of DG-algebras $HH(\mathcal{A}) \simeq HH(\mathcal{B})$.
\end{theorem}

\begin{proof} We will use the interpretation of Hochschild cohomology of Theorem \ref{toehh}. Denote the inclusion $j\colon \mathcal{B} \hookrightarrow \uD_{\fd}(\mathcal{A})$. Consider the diagram
\[
\begin{tikzcd}[row sep = large]
\hom_{\Hqe}(\mathcal{A},\mathcal{A})  \arrow[r,hook] \arrow[dr,hook] &\hom_{\Hqe}(\mathcal{A},\uD_{\fd}\uD_{\fd}(\mathcal{A})) \arrow[d,"{\hom_{\Hqe}(\mathcal{A},\uD_{\fd}(j))}"]   \arrow[r,"\sim"] & \hom_{\Hqe}(\uD_{\fd}(\mathcal{A}),\uD_{\fd}(\mathcal{A})) \arrow[d,"{\hom_{\Hqe}(j,\uD_{\fd}(\mathcal{A})) \eqcolon j^\ast}"] \\
 & \hom_{\Hqe}(\mathcal{A},\uD_{\fd}(\mathcal{B})) \arrow[r,"\sim"] & \hom_{\Hqe}(\mathcal{B},\uD_{\fd}(\mathcal{A}))  \\
& & \hom_{\Hqe}(\mathcal{B},\mathcal{B}) \arrow[u,hook,"{j_{\ast} \coloneq  \hom_{\Hqe}(\mathcal{B},j)}"]  
\end{tikzcd}
\]
where the triangle is $\hom_{\Hqe}(\mathcal{A},-)$ applied to Lemma \ref{yonedalemma}. The square commutes by naturality of the adjunction. From this we see that $j^\ast$ is quasi-fully faithful when restricted to the image of $\hom_{\Hqe}(\mathcal{A},\mathcal{A})$. The long composite on the top is the map in Equation \ref{thedude} and so $1_{\uD_{\fd}(\mathcal{A})}$ is in the image of $\hom_{\Hqe}(\mathcal{A},\mathcal{A})$. It follows that the map  
\[
j^\ast \colon HH(\uD_{\fd}(\mathcal{A})) \xrightarrow{\sim} \hom_{\Hqe}(\mathcal{B},\uD_{\fd}(\mathcal{A}))(j^\ast(1_{\uD_{\fd}(\mathcal{A})}),j^\ast(1_{\uD_{\fd}(\mathcal{A})}))
\]
is a quasi-isomorphism. By Corollary 6.6 in \cite{toe06}, the map 
\[
j_\ast : HH^\ast(\mathcal{B}) \xrightarrow{\sim} \hom_{\Hqe}(\mathcal{B},\uD_{\fd}(\mathcal{A}))(j_\ast(1_{\mathcal{B}}),j_\ast(1_{\mathcal{B}}))
\]
is a quasi-isomorphism. Applying $H^0$ to $\hom_{\Hqe}(-,-)$ and taking isomorphism classes of objects gives $\oHom_{\Hqe}(-,-)$ and so $j^\ast(1_{\uD_{\fd}(\mathcal{A})}) \simeq j \simeq j_\ast(1_{\mathcal{B}}) \in H^0\hom_{\Hqe}(\mathcal{B},\uD_{\fd}(\mathcal{A}))$. Therefore, there is a quasi-isomorphism $HH(\uD_{\fd}(\mathcal{A})) \simeq HH^\ast(\mathcal{B})$. By Proposition \ref{propersemiref}, $\mathcal{A}$ is semireflexive so by Theorem \ref{semireflhhiso},
$HH(\mathcal{A}) \simeq HH(\uD_{\fd}(\mathcal{A}))$ and we are done. 
\end{proof}

\begin{remark}
Theorem \ref{properhhinv} also follows from Theorem 4.4.1 in \cite{LVdB05}.
\end{remark}

\begin{remark} \label{functorialityhh}
The ideas in the proof of Theorem \ref{properhhinv} should be compared to the limited functoriality of Hochschild cohomology. Keller showed that Hochschild cohomology is functorial with respect to quasi-fully faithful DG-functors in \cite{Kel03}. This can almost be seen from To\"en's description. If $F\colon \mathcal{A} \to \mathcal{B} \in \Hqe$, there is a diagram in $\Hqe$
\[
\hom_{\Hqe}(\mathcal{A},\mathcal{A}) \xrightarrow{F_\ast} \hom_{\Hqe}(\mathcal{A},\mathcal{B}) \xleftarrow{F^\ast} \hom_{\Hqe}(\mathcal{B},\mathcal{B})
\]
If $F$ is quasi-fully faithful, then by Corollary 6.6 in \cite{toe06}, $F_\ast$ is quasi-fully faithful. There are isomorphisms $F_\ast(1_{\mathcal{A}}) \simeq F \simeq F^\ast(1_{\mathcal{B}}) \in H^0\hom_{\Hqe}(\mathcal{A},\mathcal{B})$. Hence, there is a diagram of DG-algebras
\[
\hom_{\Hqe}(\mathcal{A},\mathcal{A})(1_{\mathcal{A}},1_{\mathcal{A}}) \xrightarrow{\sim} \hom_{\Hqe}(\mathcal{A},\mathcal{B})(F,F) \xleftarrow{} \hom_{\Hqe}(\mathcal{B},\mathcal{B})(1_{\mathcal{B}},1_{\mathcal{B}})
\]
which is a morphism from $HH(\mathcal{B}) \to HH(\mathcal{A})$ in $\Hqe$ using Theorem \ref{toehh}. As it depends on a choice of isomorphism $F_\ast(1_{\mathcal{A}}) \simeq F$, it is not clear this construction is functorial. This can presumably be fixed by working with the additional structure provided by viewing $\Hqe$ as a bicategory but we will not pursue this here. We note only that, whatever choices are made, it follows from the proof of Theorem \ref{properhhinv}, that for a proper DG-category $\mathcal{A}$, the map $HH(\uD_{\fd}(\mathcal{A})) \to HH(\mathcal{A})$ induced by $\mathcal{A}^{op} \hookrightarrow \uD_{\fd}(\mathcal{A})$ is a quasi-isomorphism.  
\end{remark}

\subsection{Derived Picard Groups}
\label{DPicsec}

The derived Picard group of a DG-category is the enhanced version of the group of triangulated autoequivalences of its perfect derived category. In Corollary 3.16 of \cite{KS22}, it was shown that $\D^{\perf}(\mathcal{A})$ and $\mathcal{D}_{\fd}(\mathcal{A})$ have isomorphic triangulated autoequivalence groups for reflexive DG-categories. An enhanced version of this theorem follows immediately from the monoidal characterisation of reflexive DG-categories. Recall that $\Hmo$ is the defined in Section \ref{duals}.

\begin{defn}
The derived Picard group $\DPic(\mathcal{A})$ of a DG-category $\mathcal{A}$ is the group of automorphisms of $\mathcal{A}$ in $\Hmo$. 
\end{defn}

\begin{remark} \label{Dfdderivedmap}
As functors preserve isomorphisms $\uD_{\fd}(-):\Hmo \to \Hmo^{op}$ restricts to  $\DPic(\mathcal{A}) \to \DPic(\uD_{\fd}(\mathcal{A}))^{op}$ for any $\mathcal{A},\mathcal{B} \in \Hmo$.
\end{remark}

\begin{theorem} \label{DPic}
If $\mathcal{A}$ is a reflexive (semireflexive) DG-category, the group homomorphism
\[
\DPic(\mathcal{A}) \to \DPic(\uD_{\fd}(\mathcal{A}))^{op}
\]
is an isomorphism (monomorphism).
\end{theorem}

\begin{proof}
If $\mathcal{A}$ is semireflexive, then by Corollary \ref{dudeequiv}, the induced map of Remark \ref{Dfdderivedmap} is a monomorphism. If $\mathcal{A}$ is reflexive and $F \in \DPic(\uD_{\fd}(\mathcal{A}))$, then by Corollary \ref{dudeequiv}, $F = \uD_{\fd}(G)$ for some $G \in \oHom_{\Hmo}(\mathcal{A},\mathcal{A})$. Since functors preserve isomorphisms, $\uD_{\fd}(F) \in \oHom_{\Hmo}(\uD_{\fd} \uD_{\fd}(\mathcal{A}),\uD_{\fd} \uD_{\fd}(\mathcal{A}))$ is an automorphism. As $\mathcal{A}$ is reflexive and by Proposition \ref{reflexivechar}, under the isomorphism $\mathcal{A} \simeq \uD_{\fd}\uD_{\fd}(\mathcal{A})$, $G$ corresponds to $\uD_{\fd}\uD_{\fd}(F)$. Therefore, $G \in \DPic(\mathcal{A})$, as required. 
\end{proof}

\section{Finite-dimensional DG-algebras}
\label{fdsec}

A DG-algebra is finite-dimensional if its underlying chain complex is finite-dimensional over $k$. This is a strictly stronger condition than properness although by \cite[Corollary 3.12]{RS20} (or the appendix of \cite{Goo24c} for a version over any field), every proper connective DG-algebra admits a finite-dimensional model. See \cite{Orl20}, \cite{Orl23}, \cite{goo23} for some background on finite-dimensional DG-algebras. In this section we apply the above results to finite-dimensional DG-algebras. \\

In \cite[Definition 2.2]{Orl20} Orlov introduced the semisimple quotient $A/J_+$ of a finite-dimensional DG-algebra $A$. Here $J_+$ is the DG-ideal  $J + d(J)$ where $J$ is the radical of the underlying algebra of $A$ and $d$ is the differential of $A$. This behaves like the radical of a finite-dimensional algebra in many ways. For example, $\mathcal{D}(A/J_+)$ is equivalent to a product of finite-dimensional division algebras by \cite[Proposition 2.6]{Orl20}

\begin{remark} \label{DcfneqDsf} For a finite-dimensional algebra the smallest thick subcategory (i.\,e. one closed under summands shifts and extensions) containing the simple modules is its bounded derived category. However for a finite-dimensional DG-algebra $A$, the question of when $A/J_+$ generates $\D_{\fd}(A)$ is more subtle. It is true for connective finite-dimensional DG-algebras by \cite[Proposition 3.9]{Orl23}. It is clear that $\thick(A/J_+)$ is the smallest thick subcategory containing all $A$-modules which admit finite-dimensional models. However \cite[Theorem 5.4]{efi18} gives an example of a module over a finite-dimensional DG-algebra which has finite-dimensional cohomology but does not admit a finite-dimensional model. This does not rule out the possibility that $A/J_+$ generates $\mathcal{D}_{\fd}(A)$ since it is not clear that every object in $\thick(A/J_+)$ admits a finite-dimensional model.  
\end{remark}

The Koszul dual of a finite-dimensional DG-algebra $A$ is defined as 
\[
A^! = \RHom_A(A/J_+,A/J_+)
\]
By Keller's tilting theory (e.\,g. \cite[Lemma 4.2]{Kel94}), there is a quasi-equivalence of DG-categories $\thick(A/J_+) \simeq \uD^{\perf}((A^!)^{op})$. We deduce the following result about Hochschild cohomology.

\begin{prop} \label{HHfdiso}
If $A$ is a finite-dimensional DG-algebra, then there is a quasi-isomorphism of DG-algebras
\[
HH(A) \simeq HH(A^!).
\]
\end{prop}

\begin{proof}
By \cite[Theorem 2.14]{goo23}, $A \in \thick(A/J_+)$. Since $\thick(A/J_+) \subseteq \mathcal{D}_{\fd}(A)$ we are in the situation of Theorem \ref{properhhinv} with $\mathcal{A} = A$ and $\mathcal{B} =  \thick(A/J_+)$. By Keller's tilting theory (e.\,g. \cite[Lemma 4.2]{Kel94}), $\thick(A/J_+) \simeq \uD^{\perf}(A^!)$. So by Theorem \ref{properhhinv}, $HH^\ast(A) \simeq HH^\ast(\uD^{\perf}(A^!))$. Then we are done since $HH^\ast(\uD^{\perf}(A^!))) \simeq HH^\ast(A^!)$ by Remark \ref{Hochsperf}.
\end{proof}

\begin{ex} \label{gradedualeg}  
If $A = k[x]/x^2$ with $\lvert x \rvert =1$ viewed as a DG-algebra with zero differential, then $A^! = k[[t]]$ and by Proposition \ref{HHfdiso} we deduce that $HH^\ast(k[[t]]) \cong HH^\ast(k[x]/x^2)$ which can be computed as $k[[t]]$ in degrees 0 and 1 and zero otherwise. 
\end{ex}

We have the following condition for a finite-dimensional DG-algebra to be reflexive.

\begin{prop} \label{refltest}
Suppose $A$ is a finite-dimensional DG-algebra and $\thick(A/J_+) = \mathcal{D}_{\cf}(A)$. Then $A$ is reflexive if and only if $\mathcal{D}_{\cf}(A^!) \subseteq \mathcal{D}^{\perf}(A^!)$. 
\end{prop}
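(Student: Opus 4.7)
Plan: The plan is to use Koszul duality to translate the reflexivity of $A$ into a concrete finiteness condition on $A^!$ by analyzing the essential image of $\ev_A$.

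First, combining the Koszul equivalence $\mathcal{D}^{\perf}((A^!)^{op}) \simeq \thick_A(A/J_+)$ with the hypothesis $\thick_A(A/J_+) = \mathcal{D}_{\cf}(A)$ yields $\mathcal{D}_{\cf}(A) \simeq \mathcal{D}^{\perf}((A^!)^{op})$. Applying $\mathcal{D}_{\cf}(-)$, which is Morita-invariant via Proposition \ref{Toenmodules}, I get
\[
\mathcal{D}_{\cf}\mathcal{D}_{\cf}(A) \simeq \mathcal{D}_{\cf}\bigl(\mathcal{D}^{\perf}((A^!)^{op})\bigr) \simeq \mathcal{D}_{\cf}((A^!)^{op}).
\]
Since $A$ is proper (finite-dimensional), it is semireflexive by Example \ref{KSrefleg}(3), so $\ev_A \colon A \to \mathcal{D}_{\cf}((A^!)^{op})$ is quasi-fully faithful. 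By Proposition \ref{samedef}, $A$ is reflexive if and only if $\ev_A$ is quasi-essentially surjective, equivalently, $\thick_{(A^!)^{op}}(\ev_A(A)) = \mathcal{D}_{\cf}((A^!)^{op})$.

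A direct chase through the identifications, with the Koszul equivalence $\Phi \colon \mathcal{D}^{\perf}((A^!)^{op}) \simeq \mathcal{D}_{\cf}(A)$ sending $A^! \mapsto A/J_+$ and (by $k$-separability, lifting idempotents to make $A/J_+$ perfect over $A^!$) $A/J_+ \mapsto A$, gives $\ev_A(A) \simeq A/J_+$ as a right $A^!$-module. Hence
\[
\Phi\bigl(\thick_{(A^!)^{op}}(A/J_+)\bigr) = \thick_A\bigl(\Phi(A/J_+)\bigr) = \thick_A(A) = \mathcal{D}^{\perf}(A).
\]
On the other hand, Koszul duality swaps the ``perfect'' and ``cohomologically finite'' classes: the functor $\Phi$ restricts to an equivalence
\[
\mathcal{D}^{\perf}((A^!)^{op}) \cap \mathcal{D}_{\cf}((A^!)^{op}) \simeq \mathcal{D}^{\perf}(A) \cap \mathcal{D}_{\cf}(A) = \mathcal{D}^{\perf}(A),
\]
the last equality from the properness of $A$. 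Comparing the two identifications yields $\thick_{(A^!)^{op}}(A/J_+) = \mathcal{D}^{\perf}((A^!)^{op}) \cap \mathcal{D}_{\cf}((A^!)^{op})$.

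Therefore $A$ is reflexive if and only if $\mathcal{D}^{\perf}((A^!)^{op}) \cap \mathcal{D}_{\cf}((A^!)^{op}) = \mathcal{D}_{\cf}((A^!)^{op})$, i.e., $\mathcal{D}_{\cf}((A^!)^{op}) \subseteq \mathcal{D}^{\perf}((A^!)^{op})$. Finally, the ``$\mathcal{D}_{\cf} \subseteq \mathcal{D}^{\perf}$'' condition is the smoothness of the DGA, which is stable under passage to the opposite, so it translates to the statement of the proposition, $\mathcal{D}_{\cf}(A^!) \subseteq \mathcal{D}^{\perf}(A^!)$.

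The main obstacle will be rigorously establishing the Koszul swap $\Phi\bigl(\mathcal{D}^{\perf}((A^!)^{op}) \cap \mathcal{D}_{\cf}((A^!)^{op})\bigr) = \mathcal{D}^{\perf}(A)$, which is classical in spirit but needs care in the DG setting and uses the computation $\Phi(A/J_+) \simeq A$, together with the opposite-algebra symmetry of the smoothness condition $\mathcal{D}_{\cf} \subseteq \mathcal{D}^{\perf}$.
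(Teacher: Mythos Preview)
Your overall strategy is sound and is essentially an unpacking of Theorem~7.2 of \cite{goo23}; the paper's own proof simply invokes that theorem and then uses Lemma~\ref{yonedalemma} to identify the restricted Yoneda embedding $Y\mid_{\mathcal{D}^{\perf}(A)}$ with $\ev_{\mathcal{D}^{\perf}(A)}$. So where the paper black-boxes the Koszul-duality content into a citation, you are attempting to reproduce it directly. The ``Koszul swap'' you flag as the main obstacle is exactly Theorem~6.16 of \cite{goo23} (cf.\ Remark~\ref{noethhfd}), and it does require care with the separability hypothesis; citing it is preferable to asserting it.

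There is, however, a genuine gap in your final step. The condition $\mathcal{D}_{\cf}(B) \subseteq \mathcal{D}^{\perf}(B)$ is \emph{not} smoothness. Smoothness means $B$ is perfect over $B^e$, which is strictly stronger: for example $k[[t]]$ satisfies the inclusion (it has global dimension one) but is not smooth over $k$. The correct name, as noted in the remark following the proposition, is \emph{HFD-closed} in the sense of \cite{KS22}. Your justification for passing from $(A^!)^{op}$ to $A^!$ relies on the op-invariance of smoothness, so it does not apply; and op-invariance of the HFD-closed condition is not automatic for arbitrary DGAs. You must either supply a separate argument for that symmetry, or---more robustly---track the module side through the Koszul equivalence carefully enough to land directly on left $A^!$-modules rather than right ones.
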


\begin{proof}
Under the assumption that $\thick(A/J_+) = \mathcal{D}_{\cf}(A)$, Theorem 7.2 in \cite{goo23} states that the restricted Yoneda embedding of $\mathcal{D}_{\cf}(A)$ 
\[
Y\mid_{\mathcal{D}^{\perf}(A)} \colon \mathcal{D}^{\perf}(A) \hookrightarrow \mathcal{D}_{\cf}\mathcal{D}_{\cf}(A)^{op}
\]
is an equivalence if and only if $\mathcal{D}_{\cf}(A^!) \subseteq \mathcal{D}^{\perf}(A^!)$. By Lemma \ref{yonedalemma} this occurs exactly if $\ev_{\mathcal{D}^{\perf}(A)}$ is an equivalence. 
\end{proof}

\begin{remark}
The condition that $\mathcal{D}_{\cf}(A^!) \subseteq \mathcal{D}^{\perf}(A^!)$ states that $A^!$ is HFD closed in the sense of \cite{KS22}.  
\end{remark}

\begin{remark} \label{noethhfd}
If $R$ is a left Noetherian $k$-algebra of finite global dimension then every finite-dimensional module is finitely generated and so admits a finite resolution by finitely generated projectives. Hence, $\mathcal{D}_{\cf}(R) \subseteq \mathcal{D}^{\perf}(R)$. 
\end{remark}

\begin{ex} \label{powerseriesrefl}
Let $A$ be as in Example \ref{gradedualeg}. Then by Remark \ref{noethhfd}, $\mathcal{D}_{\fd}(A^!) \subseteq \mathcal{D}^{\perf}(A^!)$. By \cite[Proposition 4.6]{NK11}, $\thick(k) = \mathcal{D}_{\fd}(A)$ and so Proposition \ref{refltest} implies that $A$ is reflexive. Since $\uD_{\fd}(A)$ is Morita equivalent to $A^!$ we see that $A^!$ is also reflexive.
\end{ex}

\bibliographystyle{alpha}
\bibliography{biblio}

\end{document}